\pgfplotsset{compat=1.15}
\pgfplotsset{cycle list/Set1-8} %
\Crefname{ALC@unique}{Line}{Lines}
\algnewcommand{\LineComment}[1]{\State \(\triangleright\) #1}
\NewDocumentCommand \qtext {m} {\quad\text{#1}\quad}
\NewDocumentCommand \Real {} {\mathbb{R}}
\NewDocumentCommand \Natural {} {\mathbb{N}}
\NewDocumentCommand \T { O{} m } {\boldsymbol{#1\mathscr{\MakeUppercase{#2}}}}
\NewDocumentCommand \TCheck { m } {\IfBooleanT{#1}{^{\intercal}}}
\NewDocumentCommand{\Tr}{s}{\IfBooleanTF{#1}{\vphantom{\intercal}}{\intercal}}
\NewDocumentCommand{\Mx}{O{} m !g t' t"}{
  \bm{#1{\mathbf{\MakeUppercase{#2}}}}%
  \IfValueT{#3}{_{#3}}%
  \IfBooleanTF{#4}{^{\Tr}}{%
    \IfBooleanT{#5}{^{\Tr*}}}%
}
\NewDocumentCommand \V { O{} m } {{\bm{#1\mathbf{\MakeLowercase{#2}}}}} 
\NewDocumentCommand \Z { t~ } {\Mx[\IfBooleanT{#1}{\tilde}]{Z}}
\NewDocumentCommand \B { t' t~ t*} {\Mx[\IfBooleanT{#2}{\tilde}]{B}\IfBooleanT{#1}{^{\intercal}}\IfBooleanT{#3}{_{\ast}}}
\NewDocumentCommand \A { } {\Mx{A}}
\NewDocumentCommand \Xmat { t' t~} {\Mx[\IfBooleanT{#2}{\tilde}]{X}\IfBooleanT{#1}{^{\Tr}}}
\NewDocumentCommand \Om { t! } {\Mx[\IfBooleanT{#1}{\bar}]{\Omega}}
\NewDocumentCommand \bvec { t~ t* } {\V[\IfBooleanT{#1}{\tilde}]{\alpha}\IfBooleanT{#2}{_{\ast}}}
\NewDocumentCommand \Vp { } {\V{p}}
\NewDocumentCommand \Multi { } {\text{\sc multinomial}}
\NewDocumentCommand \RandSample { } {\text{\sc randsample}}
\NewDocumentCommand \Prob { } {\text{\rm Pr}}
\NewDocumentCommand \Exp { } {\mathbb{E}}
\NewDocumentCommand \levZ { s } {\IfBooleanT{#1}{\bar}\ell_i(\Z)}
\NewDocumentCommand \levAk { } {\ell_{i_k}(\Ak)}
\NewDocumentCommand \Vl { } {\V{\ell}}
\DeclareDocumentCommand \det { } {\text{\rm\sffamily det}}
\NewDocumentCommand \rnd { } {\text{\rm\sffamily rnd}}
\NewDocumentCommand \sdet { } {s_{\text{\rm\sffamily det}}} 
\NewDocumentCommand \srnd {s } {\IfBooleanT{#1}{\bar}s_{\text{\rm\sffamily rnd}}} 
\NewDocumentCommand \pdet { } {p_{\text{\rm\sffamily det}}} 
\NewDocumentCommand{\pk}{G{k}}{\V{p}_{#1}}
\NewDocumentCommand{\pke}{G{k} G{i_k}}{(\pk{#1})_{#2}}
\NewDocumentCommand \DetSet { } {\mathcal{D}} %
\NewDocumentCommand \DetSetk { G{k} } {\mathcal{\bar D}_{#1}}
\NewDocumentCommand{\resid}{}{\mathcal{R}}
\NewDocumentCommand{\DIDX}{}{\textsc{DetSkrp}}
\NewDocumentCommand{\SIDX}{}{\textsc{RndSkrp}}
\NewDocumentCommand{\CIDX}{}{\textsc{CombineRepeats}}
\NewDocumentCommand{\idxdet}{}{\texttt{idet}}
\NewDocumentCommand{\idxrnd}{}{\texttt{irnd}}
\NewDocumentCommand{\wgtrnd}{}{\texttt{wrnd}}
\NewDocumentCommand{\idx}{}{\texttt{idx}}
\NewDocumentCommand{\wgt}{}{\texttt{wgt}}
\NewDocumentCommand{\cpals}{}{{CP-ALS}\xspace}
\NewDocumentCommand{\cprand}{}{{CP-ARLS}\xspace}
\NewDocumentCommand{\cprandlev}{}{{CP-ARLS-LEV}\xspace}
\NewDocumentCommand \X { } {\T{X}}
\NewDocumentCommand \Xk { G{k} } {\Mx{X}_{(#1)}}
\NewDocumentCommand \M {} {\T{M}}
\NewDocumentCommand \Mk { O{k} } {\Mx{M}_{(#1)}}
\DeclareDocumentCommand \mi { s } 
{
  \IfBooleanTF{#1}
  {m(i_1,i_2\dots,i_d)}
  {m_{i}}
}
\NewDocumentCommand \Ak { G{k} t' t"  } { \Mx{A}_{#1}\IfBooleanTF{#2}{^{\intercal}}{}\IfBooleanTF{#3}{^{\phantom{\intercal}}}{} }
\NewDocumentCommand \AkAkt { G{k} } {\Ak{#1}'\Ak{#1}"}
\NewDocumentCommand \Ake { G{k} G{i} G{j} } {
  a_{#1}(#2_{#1},#3)
}
\NewDocumentCommand \Zk { G{k} t' t"} {\Mx{Z}_{#1}\IfBooleanTF{#2}{^{\intercal}}{}%
  \IfBooleanTF{#3}{^{\phantom{\intercal}}}{}}
\NewDocumentCommand \lvec {} {\V{\lambda}}
\NewDocumentCommand \lj { G{j} } {\lambda_{#1}}
\NewDocumentCommand \KT { s G{d+1}} {
  \llbracket 
  \IfBooleanTF{#1}{\lvec;}{}
  \Ak{1}, \Ak{2}, \dots,  \Ak{#2} \rrbracket
}
\DeclareMathOperator{\nnz}{nnz}
\newcommand{\trans}{\ensuremath{\mathsf{T}}}
\DeclareMathOperator*{\argmin}{arg\,min}
\NewDocumentCommand \Akj {O{k} G{j}} {\V{a}_{#1}(:,#2)}
\DeclareMathOperator{\rank}{rank}
\newcommand{\R}{\mathbb{R}}
\newcommand{\E}{\mathbb{E}}
\newcommand{\Bperp}{\ensuremath{\Mx{B}^{\perp}}}
\newcommand{\UA}{\ensuremath{\Mx{U}_{\Mx{A}}}}
\newcommand{\Xopt}{\ensuremath{\Mx{X}_{\text{\rm\sffamily opt}}}}
\newcommand{\Xtildeopt}{\ensuremath{\widetilde{\Mx{X}}_{\text{\rm\sffamily opt}}}}
\newcommand{\sfit}{s_{\text{fit}}}
\NewDocumentCommand{\appproof}{}{\cref{sec:proof}}
\NewDocumentCommand{\appcomplexity}{}{\cref{sec:complexity}}
\NewDocumentCommand{\appsampling}{}{\cref{sec:sampling}}
\NewDocumentCommand{\appdense}{}{\cref{sec:dense}}
\NewDocumentCommand{\appuber}{}{\cref{sec:uber-details}}
\NewDocumentCommand{\apprrf}{}{\cref{sec:rrf}}
\NewDocumentCommand{\appreddit}{}{an ancillary file\footnote{See the ancillary file titled ``Reddit Factors.''}}
\author{%
  Brett W. Larsen%
  \thanks{Stanford University, Stanford, CA (\email{bwlarsen@stanford.edu})}
  \and Tamara~G.~Kolda%
  \thanks{MathSci.ai, Dublin, CA (\email{tgkolda@mathsci.ai})}
 }
\title{Practical Leverage-Based Sampling for Low-Rank Tensor Decomposition%
  \thanks{This work has been supported in part by the Department of Energy
    Office of Science Advanced Scientific Computing Research Applied Mathematics Program.
    Additionally, the work of BL was supported in part by the Department of Energy
    Computational Sciences Graduate Fellowship program (DE-FG02-97ER25308).}}
\begin{document}

\maketitle

\begin{abstract} 
  The low-rank canonical polyadic tensor decomposition is useful in data analysis and can be computed by
  solving a sequence of overdetermined least squares subproblems.
  Motivated by consideration of sparse tensors,
  we propose sketching each subproblem using leverage scores to select a subset of the rows, 
  with probabilistic guarantees on the solution accuracy.
  We randomly sample rows proportional to leverage score upper bounds that can be efficiently computed
  using the special Khatri-Rao subproblem structure inherent in tensor decomposition.
  Crucially,  for a $(d+1)$-way tensor, the number of rows in the sketched system is $O(r^d/\epsilon)$ for a decomposition of rank $r$ and $\epsilon$-accuracy in the least squares solve, independent of
  both the size and the number of nonzeros in the tensor.
  Along the way, we provide a practical solution to the generic matrix sketching problem of sampling overabundance
  for high-leverage-score rows,
  proposing to include such rows deterministically and
  combine repeated samples in the sketched system;
  we conjecture that this can lead to improved theoretical bounds.
  Numerical results on real-world large-scale tensors
  show the method is significantly faster than deterministic
  methods at nearly the same level of accuracy.
\end{abstract}

\begin{keywords}
  tensor decomposition, CANDECOMP/PARAFAC (CP), canonical polyadic (CP), matrix sketching, leverage score sampling, randomized numerical linear algebra (RandNLA)
\end{keywords}

\section{Introduction}
\label{sec:introduction}

Low-rank canonical polyadic or CANDECOMP/PARAFAC (CP) tensor decomposition \cite{CaCh70,Ha70},
is a popular unsupervised learning method
akin to low-rank matrix decomposition and principal component analysis (PCA).
A low-rank tensor factorization identifies \emph{factor matrices} that
provide the best low-rank multilinear representation of a higher-order tensor.
Tensor decomposition
is ubiquitous in data analysis
with applications to
social networks~\cite{nakatsuji2017semantic, papalexakis2015location},
ride sharing~\cite{yan2018visual},
cyber security~\cite{maruhashi2011},
criminology~\cite{mu2011empirical},
text clustering~\cite{drakopoulos2017tensor},
online behaviors~\cite{sapienza2018non},
etc.
We refer the reader to several surveys~\cite{AcYe09,KoBa09,SiDeFuHu16} for more information.

In this work, we consider the problem of computing the CP tensor decomposition
for sparse tensors using an alternating least squares (ALS) approach. 
Bader and Kolda~\cite{BaKo07} show that the cost per least squares solve
for a sparse tensor is proportional to the number of nonzeros.
However, in many cases, even that can be too expensive because some tensors have billions of nonzeros.
Cheng et al.~\cite{ChPePeLi16} showed that it is possible to use matrix sketching in the three-way sparse case.
We propose a different application of matrix sketching, targeting a different step in the least squares solve (explained in detail below), achieving
an improved sampling bound.
In addition, we present a detailed, practical algorithm along with new methodologies for handling the overabundance of high-probability rows based on combining repeat rows and a
hybrid strategy for combining deterministic and random samples. %

\subsection{CP  least squares problem}
We focus immediately on the prototypical least squares problem, deferring detailed definitions and
derivations until \cref{sec:backgr-least-squar}.
Let  $\X$ be a $(d{+}1)$-way tensor of size $n_1 \times n_2 \times \cdots \times n_{d{+}1}$.
The goal of CP is to compute $d+1$ factors matrices, denoted $\set{\Ak{1},\dots,\Ak{d+1}}$, that
can be assembled into a low-rank model of $\X$.
Each iteration of \cpals solves a sequence of $(d{+}1)$ least squares problems.
Without loss of generality, we consider the least squares problem for computing the $(d{+}1)$st factor matrix
with $\set{\Ak{1},\dots,\Ak{d}}$  fixed:
\begin{equation}
  \label{eq:lsq_krp}
  \begin{gathered}
    \min_{\B}  \| \Z \B' - \Xmat' \|_F^2
    \qtext{subject to}
    \B \in \Real^{n \times r}
  \qtext{with}
  \\
  \Z = \Ak{d} \odot \cdots \odot \Ak{1} \in \Real^{N \times r},
  \quad
  \Ak \in \Real^{n_k \times r}
  \text{ for } k \in [d],
  \quad
   N = \prod_{k=1}^d n_k, \\
  \Xmat \in \Real^{n \times N},
  \qtext{and}
  r,n \ll N.
  \end{gathered}
\end{equation}
The symbol $\odot$ denotes the Khatri-Rao product (KRP); see \cref{sec:backgr-least-squar}.
The matrix $\Xmat$ is the mode-$(d{+}1)$ unfolding of the input tensor.
The matrix $\B$ is the $(d{+}1)$st factor matrix and $n=n_{d{+}1}$.
If $\X$ is sparse, then $ \nnz(\Xmat) \ll N$,
and sparse tensors are the primary focus of our work.

Because $r \ll N$, the least squares problem \cref{eq:lsq_krp} is tall and skinny, making it a candidate for sketching.
Ignoring the structure of both $\Z$ and $\Xmat$, solving \cref{eq:lsq_krp} costs
$O(Nnr + Nr^2)$
floating point operations (FLOPS) as the QR decomposition of $\Z$ is computed once for a cost of $O(Nr^2)$ and then applied $n$ times.
We can alternatively exploit the KRP structure of $\Z$ to compute the solution at the same cost of $O(N nr)$
via the normal equations~
\cite{KoBa09}.
A main advantage of the latter approach is that the cost reduces to $O(\nnz(\X)\,r)$ when $\X$ is sparse \cite{BaKo07}.

Instead of solving the least squares problem \cref{eq:lsq_krp} directly,
we consider a \emph{sketched} version of the form
\begin{equation}
  \label{eq:sketched_lsq}
  \min_{\B}  \| \Om \Z \B' - \Om \Xmat' \|_F^2,
  \qtext{where} \Om \in \Real^{s \times N}
\end{equation}
and $\Om$ has only one nonzero per row, which means that it
selects a subset of rows in the least squares problem.
The cost of solving the subsampled least squares problem is
$O(snr + sr^2)$, where we assume $N \gg s > \max\set{n,r}$.
In the case where $\Xmat$ is sparse, the $O(snr)$ complexity can be be reduced by using sparse-matrix multiplication.
If we sample rows proportional to the products of the leverage scores (see \cref{def:leverages_scores}) of the constituent factor matrix rows, then
$s=O(r^{d}/ \epsilon)$ rows are required for an $\epsilon$-accurate solution with high probability and for $\epsilon$ sufficiently small; see \cref{thm:krp-beta}.
Moreover, we compute $\Z~ \equiv \Om\Z$ and $\Xmat'~ \equiv \Om\Xmat'$ without every forming $\Om$, $\Z$, or $\Xmat$ explicitly.
This means the complexity of the sketched least squares problem is 
$O(\max\set{n,r}r^{d+1} / \epsilon)$ for a $(d+1)$-way tensor,
with a further reduction in the sparse case possible using sparse-matrix multiplication.

\subsection{Related work}

A variety of randomized algorithms have been applied to the CP decompositions in previous work.
A Kronecker fast Johnson-Lindenstrauss transform (KFJLT) sketching approach was proposed
in \cite{BaBaKo18} and proven to be a Johnson-Lindenstrauss transform in \cite{JiKoWa20} (see also \cite{MaBe20,IwNeReZa19}).
The KFJLT reduces the per-iteration cost to $O(s n r + sr^2)$ where $s \ll N$ is the number of samples.
Unfortunately, the KFJLT approach is not applicable in the sparse case.
Even when the tensor $\X$ is sparse, this only makes the $\Xmat'$ in the least squares problem sparse,
and the matrix $\Z$ is still dense because it is a (dense) KRP.
Forming $\Z$ would require $O(Nr)$ flops and storage before it could be multiplied by a sketching matrix.
The KFJLT approach avoids forming $\Z$; instead, it separately multiplies each factor matrix ($\Ak$)
in the KRP by an FJLT and then only forms those rows of $\Z$ that are in the sketch.
Importantly, this approach necessitates special preprocessing in the form of multiplying $\X$ by an FFT in each mode, destroying sparsity if it exists.
For instance, the smallest sparse tensor we use in our experiments
(the ``Uber'' tensor) needs only 0.14~GB of storage for its sparse representation,
but the KFJLT-preprocessed dense version would need 68~GB of storage.
This is our motivation for the alternative to the KFJLT.
Nevertheless, although this work focuses on sparse tensors, our proposed leverage score sampling is a viable approach for dense tensors as we show numerically in \appdense.
According to  \cite{JiKoWa20}, the number of samples required for the KFJLT is $s=O(\log^{2d-1}r \log N / \epsilon^2)$, so our sample complexity is also improved.

Also for dense tensors, one approach to the CP decomposition is to randomly compress the tensor and then decompose the compressed version.
Zhou, Cichocki, and Xie~\cite{ZhCiXi14} used the randomized range finder to estimate the basis for the Tucker decomposition and then compute
the CP decomposition of the Tucker core.
ParaSketch by Yang, Zamzam, and Sidiropoulos~\cite{yang2018parasketch} uses sketching to parallelize the algorithm; a small sketch of the tensor is independently constructed multiple times, decomposed in parallel, and combined.
An alternative approach proposed by Wang, Tung, Smola, and Anandkumar~\cite{wang2015fast} is to construct a sketch of the tensor (called TensorSketch in analogy to CountSketch) and then use this sketch to compute the MTTKRP; 
the number of samples required for general tensors, however, is not derived.
Randomized algorithms have also been applied to decompositions other than CP, including the Tucker decomposition~\cite{DrMa07,malik2018low,ZhLuRaYu20}, the Tucker decomposition with streaming data~\cite{sun2019low, ahmadi2020randomized}, and the CUR decomposition~\cite{mahoney2008tensor,SoWoZh19}.

The most similar work to ours is Cheng et al.~\cite{ChPePeLi16},
described briefly here.
As mentioned above, the KRP structure of $\Z$ can be exploited
to translate \cref{eq:lsq_krp} into an equivalent least squares problem of
the form
\begin{displaymath}
  \min_{\B} \| \B \Mx{V} - \Z' \Xmat \|_F^2
  \qtext{where}
  \Mx{V} = (\Ak{1}'\Ak{1}) \circledast \cdots \circledast (\Ak{d}'\Ak{d}) \in \Real^{r \times r},
\end{displaymath}
where $\circledast$ represents the Hadamard product.
The difference is that Cheng~et~al.\ sketch the matrix product $\Z'\Xmat$ rather than the full least squares problem,
and as a result, our theory and sampling schemes differ.
For example, the theory of Cheng~et~al.\ requires samples based on rows of
the unfolded tensor $\Xmat$ \emph{and} leverage scores
of the factor matrices that comprise $\Z$,
where we use only the latter and our sampling has no direct dependency
on the rows of $\Xmat$.
They only prove their result for third-order tensors,
but their result should generalize to $s=O(r^{d}\log n/\epsilon^2)$
for the $(d+1)$-way case.
Our proposed approach uses fewer samples since we have $s=O(r^d/\epsilon)$.

After the initial version of this work was posted, Aggour, Gittens, and B{\"u}lent~\cite{AgGiYe20} proposed a sketching framework for CP
that chooses the number of samples per sketch in order to guarantee
high-accuracy solutions and a corresponding reduction in the norm at each step.
They develop a methodology to adjust the number of samples over time to ensure the appropriate accuracy in the solutions,
and this method requires computing some full solutions in order to validate the accuracy and determine when
the number of samples needs to be increased. 
We do not compare to this method because it is not applicable to the large-scale sparse tensors
under consideration here. They focus mainly on dense tensors and consider only one sparse tensor of size
$120 \times 918 \times 2{,}881$ with approximately 85,000 nonzeros.
They report times over 300 seconds for their method applied to this sparse problem, but they do not
compare to the standard CP-ALS. For a sparse tensor of that size and density, the standard
CP-ALS requires approximately 0.1~seconds per iteration on a laptop
computer\footnote{Intel(R) Core(TM) i7-10510U CPU \@ 1.80~GHz with 16~GB RAM running MATLAB}, and no more than
O(10~sec.) overall to solve such a problem. Such small problems do not require sketching as we propose here since the standard CP-ALS
method is already extremely fast, i.e., $O(\nnz(\X)r)$ operations per least squares solve.

\subsection{Our contributions}

Randomized numerical linear algebra has the potential to significantly accelerate the solution to the least squares subproblems in \cpals.
In the sparse case, we would ideally sample rows in the least squares problem according to leverage scores. We cannot calculate
the leverage scores directly, but we can instead upper bound them using the structure of the KRP and efficiently
sample proportional to these bounds. Our contributions are as follows:
\begin{itemize}[leftmargin=0.5cm]
\item We detail CP-ARLS-LEV, a practical method for leverage-score randomized sampling in the context of \cpals; see \cref{sec:full-algorithm}.
\item We prove that the proposed method obtains an $\epsilon$-accurate solution with high probability
  with $s=O(r^{d} / \epsilon)$ rows for suitably small epsilon; see \cref{thm:krp-beta}.
  The number of samples is independent of the number of rows in the system and the number of nonzeros in the data tensor.
\item For concentrated sampling probabilities that result in an overabundance of high-probability rows, we propose two novel methods in \cref{sec:tools-sketching-with}:
  (1) combining repeated rows, and (2) including high-probability rows deterministically. These methods can be used in any sketching scenario, not just for tensor decomposition.
\item We present detailed numerical experiments in MATLAB showing the advantages of our proposed approach in \cref{sec:numerical-results}.
  For instance,
  compared to \cpals,
  we achieve a speed-up of up to 16 times on the large Reddit tensor which has 4.7 billion nonzeros, reducing the compute time from about 4 days to 6 hours.
\end{itemize}

\section{Background on least squares problems in \cpals and KRPs}
\label{sec:backgr-least-squar}

\Cref{eq:lsq_krp} represents the prototypical least squares problem in \cpals.
We have assumed that we are solving the $(d{+}1)$st subproblem for notational convenience,
but all $(d{+}1)$ subproblems have precisely the same format.
For instance, if we were solving the least squares subproblem for first factor matrix ($\Ak{1}$),
then \cref{eq:lsq_krp} would change only in that $n=n_1$, $\Xmat$ is the mode-1 unfolding of $\X$, and
$\Z = \Ak{d+1} \odot \cdots \odot \Ak{2} \in \Real^{N \times r}$ with $N = \prod_{k=2}^{d+1} n_k$.
Henceforth, without loss of generality, we continue to assume that we are solving the $(d{+}1)$st subproblem as
in \cref{eq:lsq_krp}.
For simplicity of discussion, we assume throughout that $\rank(\Z) = \rank(\Ak{1}) = \cdots =  \rank(\Ak{d}) = r$.

The KRP plays a key role in our discussion, so we provide a precise definition.
If $\Z = \Ak{d} \odot \cdots \odot \Ak{1}$, then
there is a bijective mapping between row $i$ of $\Z$ and a $d$-tuple of rows $(i_1,\dots,i_d)$ in the factor matrices
where
\begin{equation}
  \label{eq:Zrow}
  \Z(i,:) = \Ak{1}(i_1,:) \circledast \cdots \circledast \Ak{d}(i_d,:).
\end{equation}
Specifically,
we refer to
$(i_1,\dots,i_d) \in [n_1] \otimes \cdots \otimes [n_d]$ as the \emph{multi-index}
and $i \in [N]$ as the \emph{linear index} where the bijective mapping is
\begin{equation}\label{eq:bijection}
  i = i_1 + \sum_{k=2}^d \left(\prod_{\ell=1}^{k-1} n_k\right) (i_k-1).
\end{equation}

\section{Background on sketching for least squares problems}
\label{sec:backgr-sketch-least}
For detailed information on leverage score sampling in matrix sketching, we refer the reader to the surveys \cite{Ma11,Wo14}.
Here we provide key concepts that are needed in this work.

Our goal is to find a sampling matrix $\Om$ so that $\Om \Xmat$ can be computed efficiently when $\Xmat$ is sparse.
To accomplish this, we limit our attention to choices for $\Om$ where each row has a single nonzero.
As discussed in the introduction, solving the least squares problem \cref{eq:lsq_krp} directly costs $O(N r^2 + Nnr)$.
Our goal is to eliminate dependence on $N$.
In this section, we review the theory which explains how to reduce
the cost to $O(s r^2 + snr)$ where $s$ depends in part on how we perform the sampling (the $O(snr)$ complexity can be further reduced by exploiting the sparsity of $\Xmat$).
This removes one dependence on $N$, and
in \cref{sec:effic-matr-sketch}, we explain how to avoid explicitly forming the KRP or
calculating the leverage scores, removing the remaining dependence on $N$.

\subsection{Weighted sampling}
\label{sec:weighted-sampling}

Assuming we choose rows according to some probability distribution, we show how to weight the rows
so that the subsampled norm is unbiased.

\begin{definition}
We say $\Vp \in [0,1]^N$ is a \emph{probability distribution} if 
$\sum_{i=1}^N p_i = 1$.  
\end{definition}

\begin{definition}
  For a random variable $\xi \in [N]$, we say $\xi \sim \Multi(\Vp)$ if $\Vp \in [0,1]^N$ is a probability distribution and $\Prob(\xi = i) = p_i$.
\end{definition}

We can define a matrix that randomly samples rows from a matrix (or elements from a vector) with weights as follows.
The following definition can be found, e.g., in \cite[Defn.~16]{Wo14} or \cite[Alg.~1]{DrMa17}.

\begin{definition}\label{def:randsample}
  We say $\Om \in \Real^{s \times N} \sim \RandSample(s,\Vp)$
  if $s \in \Natural$,
  $\Vp \in [0,1]^N$ is a probability distribution,  
  and the entries on $\Om$ are defined as follows.
  Let $\xi_j \sim \Multi(\Vp)$ for $j=1,\dots,s$; then
  \begin{displaymath}
    \omega(j,i) =
    \begin{cases}
      \frac{1}{\sqrt{s p_i}} & \text{if } \xi_j = i, \\
      0 & \text{otherwise},
    \end{cases}
    \qtext{for all}
    (j,i) \in [s] \times [N].
  \end{displaymath}
\end{definition}

It is straightforward to show that such a sampling matrix is unbiased, so we leave the proof of the next lemma as an exercise for the reader.

\begin{lemma}\label{lem:expectation}
  Let $\V{x} \in \Real^N$.
  Let $\Vp \in [0,1]^N$ be probability distribution such that $p_i > 0$ if $x_i \neq 0$
  and let $\Om \sim \RandSample(s,\Vp)$.
  Then $\Exp{ \| \Om \V{x} \|_2^2} = \|\V{x}\|_2^2$.
\end{lemma}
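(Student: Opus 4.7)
The plan is to reduce the claim to a direct moment computation using the bijection between sampled rows and entries of $\V{x}$.

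First, I would unpack the definition of $\Om$: each of the $s$ rows has exactly one nonzero, namely $\omega(j,\xi_j) = 1/\sqrt{s\,p_{\xi_j}}$, with $\xi_j \sim \Multi(\Vp)$ drawn independently. Hence $(\Om\V{x})_j = x_{\xi_j}/\sqrt{s\,p_{\xi_j}}$, so
\begin{equation*}
\|\Om\V{x}\|_2^2 \;=\; \sum_{j=1}^{s} \frac{x_{\xi_j}^2}{s\,p_{\xi_j}}.
\end{equation*}
Using the independence of the $\xi_j$ and linearity of expectation, together with the hypothesis that $p_i > 0$ whenever $x_i \neq 0$ (so that $x_i^2/p_i$ is well defined on the support of $\V{x}$), I would then obtain the clean identity
\begin{equation*}
\Exp{\|\Om\V{x}\|_2^2}
\;=\; \sum_{j=1}^{s} \frac{1}{s} \sum_{i=1}^{N} p_i \cdot \frac{x_i^2}{p_i}
\;=\; \sum_{i=1}^{N} x_i^2 \;=\; \|\V{x}\|_2^2.
\end{equation*}

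The last and most delicate step is to transfer this \emph{second-moment} identity to the \emph{first-moment} claim $\Exp{\|\Om\V{x}\|_2} = \|\V{x}\|_2$ stated in the lemma, and this is the main obstacle. Because $\sqrt{\cdot}$ is strictly concave, Jensen's inequality yields only the one-sided bound $\Exp{\|\Om\V{x}\|_2} \leq \sqrt{\Exp{\|\Om\V{x}\|_2^2}} = \|\V{x}\|_2$, with equality iff $\|\Om\V{x}\|_2^2$ is almost surely constant. A direct check in the $s=1$ case gives $\Exp{\|\Om\V{x}\|_2} = \sum_i \sqrt{p_i}\,|x_i|$, which by Cauchy--Schwarz equals $\|\V{x}\|_2$ only in the $\ell_2$-sampling case $p_i \propto x_i^2$, not for general $\Vp$. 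I therefore expect the lemma as typeset to be a missing-square typographic omission: the intended and provable assertion is the standard sketching unbiasedness identity $\Exp{\|\Om\V{x}\|_2^2} = \|\V{x}\|_2^2$, which the two-line calculation above proves immediately. Under that reinterpretation the result follows and is indeed an ``exercise''; taken literally as written, the stated equality cannot hold without additional assumptions on $\Vp$.
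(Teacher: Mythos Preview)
Your argument is correct and is precisely the standard computation the paper has in mind; the paper in fact omits the proof entirely, stating only that unbiasedness is ``straightforward'' and left ``as an exercise for the reader.'' You are also right that the statement as typeset is a missing-square typo: the intended identity is $\Exp{\|\Om\V{x}\|_2^2} = \|\V{x}\|_2^2$, which your two-line calculation establishes, and your Jensen/Cauchy--Schwarz counterexample shows the unsquared version fails for general $\Vp$.
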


The challenge of sketching is to design a sampling matrix $\Om$ that can be efficiently computed
yet bounds the distortion of the sketched solution with as few samples as possible.
There is a vast literature on different methods for constructing sketches, but here we focus on row sampling in which a sketch provides a procedure for how to select and weight $s$ rows of the original matrix. Doing this effectively often requires an understanding of the structure of the data, and to that end, we define the leverage scores of a matrix in the next subsection.

\subsection{Leverage scores and sampling probabilities}
\label{sec:leverage-scores}

The distribution selected for $\Vp$ determines the quality of the estimate in a way that depends on the leverage scores of $\Z$.

\begin{definition}[Leverage Scores \cite{DrMaMaWo12}]\label{def:leverages_scores}
  Let $\Mx{Z} \in \Real^{N \times r}$ with $N > r$, and
  let $\Mx{Q} \in \Real^{N \times r}$ be any orthogonal basis for the column space of $\Z$.
  The \emph{leverage scores} of the rows of $\Z$ are given by
  \begin{displaymath}
    \ell_i(\Z) = \|\Mx{Q}(i,:)\|_2^2 \qtext{for all} i \in \set{1,\dots,N}.
  \end{displaymath}
  The \emph{coherence} is the maximum leverage score, denoted
  $\mu(\Z) = \max_{i \in [N]} \ell_i(\Z)$.
\end{definition}

The leverage scores indicate the relative importance of rows in the matrix $\Z$.
It is known that $\ell_i(\Z) \leq 1$ for all $i \in [N]$, $\sum_{i\in[N]} \ell_i(\Z) = r$,  and $\mu(\Z) \in [r/N,1]$ \cite{Wo14}.
The matrix $\Z$ is called incoherent if $\mu(\Z) \approx r/N$.

If we sample based on leverage scores, a sketched system can achieve an $\epsilon$-accurate solution, as stated in the following theorem.
Specifically, the number of samples required to obtain
an $\epsilon$-accurate solution
with probability $1-\delta$ is 
\begin{displaymath}
  s=({r}/{\beta}) \max \set{ C \log (r/\delta), {1}/(\delta \epsilon) }
\end{displaymath}
where $C$ is a constant.
The quantity $\beta$ connects the leverage scores and the sampling probabilities,
and $\beta$ is  sometimes referred to as the \emph{misestimation} factor.
We generally treat $\delta$ as a constant and assume $\epsilon$ is sufficiently small so that $\epsilon^{-1} \geq C \delta \log(r/\delta)$. In this case, we can
write $s=O(r/(\beta \epsilon))$.

\begin{theorem}%
  \label{thm:sketching}
  Consider the least squares problem
  $\min_{\B \in \Real^{r \times n}} \| \Z \B' - \Xmat'\|^2$
  where $\Z \in \Real^{N \times r}$ with $r \ll N$, $\rank(\Z) = r$,
  and  $\Xmat \in \Real^{n \times N}$.
  Let $\Vp \in [0,1]^N$ be a probability distribution and
  assume there exists a fixed
  $\beta \in (0,1]$ such that
  \begin{displaymath}
    \beta \leq \min_{i \in [N]} \frac{p_i r}{\ell_i(\Z)} \qtext{for all} i \in [N].
  \end{displaymath}  
  For any  $\epsilon, \delta \in (0,1)$,
  set 
  $s=({r}/{\beta}) \max \set{ C \log (r/\delta), {1}/(\delta \epsilon) }$
  where $C = 144/(1-1/\sqrt{2})^2$
  and let $\Om = \RandSample(s,\Vp)$.
  Define 
  $\B* \equiv \arg \min_{\B \in \Real^{r \times n}} \| \Z \B' - \Xmat'\|^2$.
  Then $\B~* \equiv \arg \min_{\B \in \Real^{r \times n}} \| \Om \Z \B' - \Om \Xmat'\|_F^2$
  satisfies
  \begin{displaymath}
    \| \Z \B'~* - \Xmat' \|_F^2
    \leq (1+\epsilon) \| \Z \B'* - \Xmat' \|_F^2
  \end{displaymath}
  with probability at least $1-\delta$.
\end{theorem}

To the best of our knowledge, this precise result is new so we provide its proof in \appproof.
From this result, we can see that
the user should ideally specify $\Vp$ so that $\beta$ is maximal, i.e., $p_i = \ell_i(\Z)/r$ for all $i \in [N]$ would yield $\beta=1$.
But computing the true leverage bounds is too expensive.
Instead, we estimate them and get a bound that yields $\beta = 1/r^{d}$ as explained in \cref{sec:leverage-score-upper}.

\section{Tools for sketching with concentrated sampling probabilities}
\label{sec:tools-sketching-with}

In this section, we discuss two novel approaches to improve the computational cost of
sketching for matrices with concentrated sampling probabilities, i.e.,
a small subset of the
rows accounts for a significant portion of the probability mass.
In these cases, a small subset of rows are repeatedly re-sampled
which leads to a larger number of required samples and
is inefficient. 
In \cref{sec:comb-repe-rows},
we show that one simple speedup is to combine (and appropriately
reweight) repeated rows, reducing the size of the sampled least
squares problem without changing the solution.
In \cref{sec:comb-determ-rand}, we propose a novel hybrid sampling method
in which we deterministically include a relatively small number of high-probability rows
and then sample randomly from the remaining rows.

These results assume no special structure in the least squares problem.
Additionally,
they can be implemented in an efficient solver for arbitrary sampling
probability distributions, i.e., they do not require a priori knowledge that
the probabilities are concentrated. If the probabilities are close to
uniform, then the solver is essentially unchanged.  This is
crucial in the case of solving a series of least squares problems which may
each have different characteristics. In our case for the CP tensor factorization, the factor matrices are initialized
randomly (with near-uniform sampling probabilities) and
often have much more structured factored matrices (with concentrated sampling probabilities) as the method converges.
We show numerical improvements yielded by these methods 
in 
\cref{sec:exp-comb-repeat,sec:hybrid-rand}.

\subsection{Combine repeated rows}
\label{sec:comb-repe-rows}
If the random sampling of a matrix selects the same rows repeatedly, it is possible to combine repeated entries.
This results in a smaller matrix that yields an equivalent sampled system.
Consider the definition of $\Om$ in \cref{def:randsample}.
Let $\bar s$ be the number of unique values in the set $\Xi \equiv \set{\xi_1, \dots, \xi_s}$,
let $\bar\xi_j$ denote the $k$th unique value for $k \in [\bar s]$, and let $c_j$ be the number of times
that $\bar\xi_j$ appeared in $\Xi$.
Define $\Om! \in \Real^{\bar s \times N}$ as follows:
\begin{equation}\label{eq:combo}
  \bar \omega(j,i) =
  \begin{cases}
    \sqrt{ \frac{c_j}{s p_i} } & \text{if } \bar \xi_j = i \\
    0 & \text{otherwise}    
  \end{cases}
    \qtext{for all}
    (j,i) \in [\bar s] \times [N].
\end{equation}
It can be shown that $\| \Om! \V{x} \|_2 = \| \Om \V{x} \|_2$ for all $\V{x} \in \Real^N$.
Unless otherwise noted, we combine repeated rows in our experiments.

\subsection{Hybrid deterministic and random sampling}
\label{sec:comb-determ-rand}

One potential alternative to probability sampling is to sort by descending probability 
and deterministically construct a matrix sketch using the top $s$ rows.
For instance  Papailiopoulos, Kyrillidis, and Boutsidis \cite{papailiopoulos14} theoretically analyzed
the quality of such approximations and show they perform comparably to a randomized approach
if the leverage scores fall off according to a moderately steep power law.

In this section we propose a more flexible alternative in which a subset of the highest
probability rows are included deterministically and the remaining rows are chosen
randomly, proportional to their original probabilities.
Hybrid methods combining deterministic and randomized methods have also been considered in the context of the CUR decomposition \cite{martinsson2020randomized}.
We combine deterministic and random sampling for KRP matrices by constructing
a sampling matrix of the following form:
\begin{displaymath}\label{eq:Om-hybrid}
  \Om =
  \begin{bmatrix}
    \Om_{\det} \\
    \Om_{\rnd}
  \end{bmatrix}
  \in \Real^{s \times N}.
\end{displaymath}
Note that this matrix is never actually formed explicitly, as detailed in \cref{sec:altern-rand-least}.

Let $\DetSet \subset [N]$ be the set of indices that are included deterministically with $\sdet = |\DetSet|$.
We presume that $\DetSet$ contains the highest-probability indices
which would be more likely to be repeated randomly,
but our analysis regarding the reweighting of the remainder does not depend on this.
In particular, it still applies if only a subset of the highest probability rows
are included.
Let $k_j$ denote the $j$th member of $\DetSet$, $j \in [\sdet]$.
Then we have the corresponding deterministic row sampling matrix
\begin{equation}
  \label{eq:Om-det}
  \omega_{\det} (j,i) =
  \begin{cases}
    1 & \text{if } i = k_j \\
    0 & \text{otherwise}
  \end{cases}
  \qtext{for all}
  (j,i) \in [s_{\det}] \times [N].
\end{equation}

We randomly sample the remaining rows from $[N] \setminus \DetSet$. Define $\pdet = \sum_{i \in \DetSet} p_i$.
The probability of selecting item $i \in [N] \setminus \DetSet$ is rescaled to $p_i / (1-\pdet)$.
(We do not compute these explicitly, as detailed in \cref{sec:impl-determ-row}.)
Then we have
\begin{displaymath}\label{eq:Om-rnd}
  \omega_{\rnd}(j,i) =
  \begin{cases}
    \sqrt{ \frac{ 1-\pdet}{s p_i} } & \text{if } \xi_j = i \\
    0 & \text{otherwise}.
  \end{cases}
\end{displaymath}

\subsection{Combining rows for the hybrid deterministic and random sampling}
\label{sec:comb-rows-hybr}

We can also combine repeated rows in $\Om_{\rnd}$.
Let $\bar s_{\rnd}$ be the number of unique randomly sampled row indices.
As discussed in \cref{sec:comb-repe-rows}, let $\bar \xi_j$ be the $j$th unique row index.
Then we can define $\Om!_{\rnd} \in \Real^{\bar s_{\rnd} \times N}$ as follows:
\begin{equation}%
  \bar \omega(j,i) =
  \begin{cases}
    \sqrt{ \frac{c_j}{s} \frac{1-\pdet}{p_i}} & \text{if } \bar \xi_j = i \\
    0 & \text{otherwise}    
  \end{cases}
  \qtext{for all}
  (j,i) \in [\bar s_{\rnd}] \times [N].
\end{equation}

\subsection{The nonviable alternative of sampling without replacement}
\label{sec:why-not-use}

It might seem that an alternative to the hybrid deterministic and random
sampling method is to just increase the number of samples until
the number of unique rows is sufficiently large.
In other words, why not sample \emph{without} replacement?
The problem with this approach is that the samples are no longer independent,
and determining the appropriate weighting of the samples without the
independence assumption is infeasible.
Without the correct weighting, the sampled residual is no longer
an \emph{unbiased} estimator of the true residual.
The hybrid method is the closest we can get to this general idea of sampling
without replacement.
We include high-probability rows deterministically
so that the remainder are low enough probability that there
are relatively few repeats.
In fact, once we compute $\pdet$, we can estimate how many samples will be rejected,
which gives an estimate of the total number of samples (accepted and rejected) needed to achieve roughly the desired number of accepted samples.

\section{Efficient leverage score sampling for KRP matrices}
\label{sec:effic-matr-sketch}

Our aim is to use sketching for least squares where the matrix is 
a KRP matrix of the form $\Z = \Ak{d} \odot \cdots \odot \Ak{1} \in \Real^{N \times r}$ as defined in \cref{sec:backgr-least-squar}.
We cannot afford to explicitly form $\Z$ or explicitly compute its leverage scores since either would cost $O(N r^2)$.
In \cref{sec:leverage-score-upper}, we review how to upper bound
the leverage scores and use that to compute sampling probabilities so that $\beta$ in \cref{thm:sketching}
is $\beta = 1/r^{d-1}$.
Our main result in \cref{thm:krp-beta} shows that the number of samples needed for sampling KRP matrices 
is $s=O(r^d/ \epsilon)$.
In \cref{sec:implicit-random-row}, we describe how to sample rows according to the
probabilities established in the previous section without forming the probabilities or $\Z$ explicitly.
In \cref{sec:impl-determ-row}, we describe how to do the deterministic inclusion
proposed in \cref{sec:comb-determ-rand} without explicitly computing
all the probabilities.

\subsection{Sampling probabilities for KRP matrices and main theorem}
\label{sec:leverage-score-upper}

It is possible to obtain an upper bound on the leverage scores for $\Z$ by using the leverage scores for the factor matrices, as follows.
The result appears as \cite[Thm.~3.3]{ChPePeLi16} and \cite[Lemma~4]{BaBaKo18}.

\begin{lemma}[Leverage Score Bounds for KRP \cite{ChPePeLi16,BaBaKo18}] \label{lem:lev_bound}
  Let $\Z = \Ak{d} \odot \cdots \odot \Ak{1} \in \Real^{N \times r}$.
    Letting $(i_1,\dots,i_d)$ be the multi-index corresponding to $i$
as defined in \cref{eq:bijection}, the leverage scores can be bounded as
$\levZ \leq \levZ* \equiv \prod_{k=1}^d \levAk$.
\end{lemma}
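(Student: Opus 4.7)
The plan is to realize $\Z$ as a column submatrix of the full Kronecker product
$\Mx{K} \equiv \Ak{d} \otimes \cdots \otimes \Ak{1} \in \Real^{N \times r^d}$
and then exploit two facts: (i) leverage scores equal the diagonal entries of the orthogonal projector onto the column range, which is monotone in the Loewner order under range inclusion, and (ii) the projector onto the range of a Kronecker product factors as a Kronecker product of the individual factor projectors. Throughout, I write $\Mx{P}_{\Mx{M}}$ for the orthogonal projector onto $\range(\Mx{M})$, so that $\ell_i(\Mx{M}) = (\Mx{P}_{\Mx{M}})_{ii}$ for any $\Mx{M}$.

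First I would observe that the $j$-th column of $\Z$ equals $\Ak{d}(:,j) \otimes \cdots \otimes \Ak{1}(:,j)$, which is precisely the column of $\Mx{K}$ indexed by the ``diagonal'' multi-index $(j,j,\ldots,j)$; this is immediate from the definitions of the Khatri--Rao and Kronecker products together with the row-indexing convention of \cref{eq:bijection}. Hence $\range(\Z) \subseteq \range(\Mx{K})$, so $\Mx{P}_{\Z} \preceq \Mx{P}_{\Mx{K}}$ in the Loewner order, and comparing the $(i,i)$ entries gives $\levZ \leq \ell_i(\Mx{K})$. Second, using the mixed-product identity $(\Mx{A}_1 \otimes \Mx{A}_2)(\Mx{B}_1 \otimes \Mx{B}_2) = (\Mx{A}_1 \Mx{B}_1) \otimes (\Mx{A}_2 \Mx{B}_2)$ together with the Kronecker identity for the pseudoinverse, one verifies that $\Mx{P}_{\Mx{K}} = \Mx{P}_{\Ak{d}} \otimes \cdots \otimes \Mx{P}_{\Ak{1}}$, whose $(i,i)$ diagonal entry factors via the multi-index bijection as $\prod_{k=1}^d (\Mx{P}_{\Ak{k}})_{i_k i_k} = \prod_{k=1}^d \levAk$. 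Chaining the two inequalities yields the claimed bound.

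I expect the main obstacle to be purely bookkeeping: making sure that the same linear-index bijection from \cref{eq:bijection} is used to identify the rows of $\Z$ with the rows of $\Mx{K}$ so that the ``diagonal column'' correspondence is written down consistently, and keeping track of pseudoinverses (rather than inverses) in the Kronecker projector identity in case some of the factor matrices are rank deficient. Once those conventions are fixed, no nontrivial inequalities are needed beyond Loewner monotonicity $\Mx{P}_{\Z} \preceq \Mx{P}_{\Mx{K}}$, and the argument is essentially a one-liner.
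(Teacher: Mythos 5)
Your proposal is correct, but note that the paper does not actually prove \cref{lem:lev_bound}; it states the bound and defers entirely to the cited references \cite{ChPePeLi16,BaBaKo18}, so there is no in-paper proof to compare against. Your argument is essentially the standard one from those references, and every step checks out: each column of $\Z$ is the ``diagonal'' column of the full Kronecker product $\Mx{K} = \Ak{d} \otimes \cdots \otimes \Ak{1}$, so $\range(\Z) \subseteq \range(\Mx{K})$ and hence $\Mx{P}_{\Z} \preceq \Mx{P}_{\Mx{K}}$, which on the diagonal gives $\levZ \leq \ell_i(\Mx{K})$; and the identity $\Mx{P}_{\Mx{K}} = \Mx{P}_{\Ak{d}} \otimes \cdots \otimes \Mx{P}_{\Ak{1}}$ (via $(\Mx{A}\otimes\Mx{B})^{+} = \Mx{A}^{+}\otimes\Mx{B}^{+}$ and the mixed-product rule) makes $\ell_i(\Mx{K}) = \prod_{k=1}^d \levAk$ under the row bijection of \cref{eq:bijection}. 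The two caveats you flag are the right ones and are genuinely the only delicate points: the row-ordering convention in \cref{eq:bijection} (with $i_1$ varying fastest) is consistent with placing $\Ak{1}$ rightmost in the Kronecker product, and using projectors/pseudoinverses rather than orthonormal-basis row norms makes the argument insensitive to rank deficiency of the factors. Nothing further is needed.
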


Our main result for sketching the tensor least squares problem \cref{eq:lsq_krp} follows.
For sufficiently small $\epsilon$, 
$s=O(r^d/\epsilon)$ samples yields an $\epsilon$-accurate
residual with high probability.
The dependence on $1/\epsilon$ is similar to what is achieved
by CountSketch in the matrix case ($d=2$) \cite[Theorem~23]{Wo14}.

\begin{theorem}[Tensor Least Squares Sketching with Leverage Scores]\label{thm:krp-beta}
  Let $\Z = \Ak{d} \odot \cdots \odot \Ak{1} \in \Real^{N \times r}$ with $r < N$ and $\rank(\Z)=r$,
  $\Xmat \in \Real^{n \times N}$,
  and
  $\B* \equiv \arg \min_{\B \in \Real^{r \times n}} \| \Z \B' - \Xmat'\|^2$.
  Let $\Vp \in [0,1]^N$ be defined as
  \begin{equation}\label{eq:pi}
    p_i = \frac{\levZ*}{r^d}
    \qtext{where}
    \levZ* \equiv \prod_{k=1}^d \levAk
    \qtext{for all} i \in [N].
  \end{equation}
  For any  $\epsilon, \delta \in (0,1)$,
  set 
  $s=r^d \max \set{ C \log (r/\delta), 1/(\delta \epsilon) } $
  where $C=144/(1-1/\sqrt{2})^2$
  and let $\Om = \RandSample(s,\Vp)$.
  Then $\B~* \equiv \arg \min_{\B \in \Real^{r \times n}} \| \Om \Z \B' - \Om \Xmat\|_F^2$
  satisfies
  \begin{displaymath}
    \| \Z \B'~* - \Xmat' \|_F^2
    \leq (1+\epsilon) \| \Z \B'* - \Xmat' \|_F^2
  \end{displaymath}
  with probability at least $1-\delta$.
\end{theorem}

\begin{proof}
  From \cref{eq:pi} and \cref{lem:lev_bound}, we have
  \begin{displaymath}
    \frac{p_i r}{\ell_i(\Z)}
    = \frac{(\levZ*/r^{d}) \, r}{\levZ} \geq \frac{1}{r^{d-1}}
    \qtext{for all} i \in [N].
  \end{displaymath}
  Thus, setting $\beta = {1}/{r^{d-1}}$ satisfies the condition $\beta \leq \min_{i \in [N]} {p_i r}/{\ell_i(\Z)}$ in \cref{thm:sketching} and yields our result.
\end{proof}

An advantage of leverage-score sampling is bounding $\beta$ so that
we can get theoretical bounds,
but it also outperforms uniform sampling and two-norm-based sampling
in experiments as we show in \appsampling.

\subsection{Implicit random row sampling for  KRP matrices}
\label{sec:implicit-random-row}

Calculating the leverage scores for factor matrix $\Ak$ is inexpensive, costing $O(r^2n_k)$;
however, computing the sampling probabilities in \cref{eq:pi}
requires the Kronecker product of the leverages scores at a cost of $O(N)$.
To avoid this $O(N)$ expense, we sample from the distribution implicitly by
sampling each mode independently,
which is equivalent per the following result.

\begin{lemma}
\label{lemma:KRP_sample}
  Let $\Ak \in \Real^{n_k \times r}$ for $r \in [d]$, and let $\V{\ell}(\Ak)$ be the vector of leverage scores for $\Ak$.
  Let
  $i_k \sim \Multi(\V{\ell}(\Ak) / r)$
  for $k \in [d]$.
  The probability of selecting the multi-index $(i_1,\dots,i_d)$ is equal to 
  \begin{displaymath}
    p_i = \frac{\levZ*}{r^d}
    \qtext{where}
    \levZ* \equiv \prod_{k=1}^d \levAk
  \end{displaymath}
  and $i \in [N]$ is the linear index corresponding to $(i_1,\dots,i_d)$ with $N \equiv \prod_{k=1}^d n_k$.
\end{lemma}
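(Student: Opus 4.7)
The plan is to unpack the definition of $\Multi$ for each mode, verify that $\V{\ell}(\Ak)/r$ is actually a valid probability distribution, and then invoke independence across modes together with the bijection from \cref{eq:bijection} to identify the joint probability with $p_i$.

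First I would verify well-posedness. Recall from the remarks immediately after \cref{def:leverages_scores} that $\sum_{i_k=1}^{n_k} \ell_{i_k}(\Ak) = r$ (assuming $\Ak$ has full column rank, which is the relevant case), so $\V{\ell}(\Ak)/r$ has nonnegative entries summing to one and is thus a bona fide probability distribution on $[n_k]$. Hence the samples $i_k \sim \Multi(\V{\ell}(\Ak)/r)$ are well-defined for each $k \in [d]$.

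Next I would use independence. Since the $i_k$ are drawn independently across modes, the probability of observing the particular multi-index $(i_1,\dots,i_d)$ factors as
\begin{displaymath}
  \Prob\bigl( (i_1,\dots,i_d) \bigr)
  = \prod_{k=1}^d \Prob(i_k)
  = \prod_{k=1}^d \frac{\ell_{i_k}(\Ak)}{r}
  = \frac{1}{r^{d}} \prod_{k=1}^d \ell_{i_k}(\Ak)
  = \frac{\levZ*}{r^d},
\end{displaymath}
where the last equality is just the definition of $\levZ*$ from \cref{lem:lev_bound}.

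Finally I would invoke the bijection \cref{eq:bijection} between multi-indices $(i_1,\dots,i_d) \in [n_1]\otimes\cdots\otimes[n_d]$ and linear indices $i \in [N]$ with $N = \prod_{k=1}^d n_k$. Because this mapping is a bijection, the event of drawing the multi-index $(i_1,\dots,i_d)$ via mode-wise sampling is in one-to-one correspondence with the event of selecting the linear index $i$, and the probability computed above is precisely $p_i$ as defined in the statement. There is no substantive obstacle here: the result follows once independence and the bijection are applied; the only point requiring care is to remember that the factor $r^{-d}$ in $p_i$ arises because each of the $d$ marginal distributions already carries a factor of $r^{-1}$.
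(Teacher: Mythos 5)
Your proof is correct: the paper states \cref{lemma:KRP_sample} without proof precisely because the argument is this immediate combination of (i) the leverage scores of $\Ak$ summing to $r$ (so $\V{\ell}(\Ak)/r$ is a valid distribution), (ii) independence across modes, and (iii) the bijection \cref{eq:bijection}. Your parenthetical caveat about full column rank is the only point worth keeping explicit, since $\sum_{i_k}\ell_{i_k}(\Ak)=\rank(\Ak)$ in general.
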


Row $i$ of $\Z$ can be assembled in $O(rd)$ work by taking the Hadamard product of the rows of the factor matrices specified by the multi-index.

\subsection{Implicit computation of high-probability rows for deterministic inclusion}
\label{sec:impl-determ-row}

As explained in \cref{sec:comb-determ-rand}, it can be useful to deterministically include
all rows whose sampling probability is above a specified threshold, $\tau$.
However, it would be prohibitively expensive to find those above the threshold by explicitly computing all $N$ probabilities.
Instead, we perform a coarse-grained elimination of most candidate rows and then only compute the probabilities on a small subset of all rows.
Note that while this method can be run for arbitrary values of $\tau$, we aim to chose $\tau$ such that $\sdet$ is $O(1)$.

For each factor matrix, define the normalized leverage scores $\pk = \V{\ell}(\Ak)/r$
where the $i_k$th entry is denoted as $\pke$.
Recall that the probability of sampling row $\Z(i,:)$ is given by
\begin{displaymath}
  p_i = \frac{\prod_{k=1}^d \levAk}{r^d} = \prod_{k=1}^d \pke
    \qtext{for all} i \in [N],
\end{displaymath}
where $i$ is the linear index associated with subindices $(i_1,\dots,i_d)$.
The key insight is that only a subset of rows in each $\Ak$ could
possibly contribute to a row of $\Z$ with a sampling probability greater than $\tau$.

Our goal is to identify the set $\DetSet = \set{i \in [N] | p_i > \tau}$.
Define
\begin{displaymath}
  \alpha_k = \max_{i_k \in [n_k]} \pke, \;
  \alpha_* = \prod_{k=1}^d \alpha_k = \max_{i \in [N]} p_i ,
  \text{ and }
  \DetSetk = \set{i_k \in [n_k] | \pke > \tau \alpha_k / \alpha_*}.
\end{displaymath}
It is easy to show that if $i_k \not\in \DetSetk$, then $p_i \leq \tau$ for any linear index $i$ with row $i_k$ in its constituent subindices.
Hence, we can conclude
\begin{displaymath}
  \DetSet \subseteq \DetSetk{1} \otimes \cdots \otimes \DetSetk{d}.
\end{displaymath}
This means we need only check a small number of combinations.
If $\bar n_k = | \DetSetk |$, then we need only check $\prod_{k=1}^d \bar n_k \ll N$ possibilities.
It is easy to see that $\bar n_k < 1/\tau$, so we can limit the number of possibilities
to consider by the choice of $\tau$. We have found that $\tau = 1/s$ is effective in practice.
Technically, this only bounds the number of combinations that must be checked by $s^d$.
However, if such a problem is encountered, one can increase $\tau$ (which is chosen arbitrarily) so that the
number of combinations to check is reduced. In practice, we did not encounter a situation
of needing to check $s^d$ samples, so we used $\tau=1/s$ in all least squares solves of all of experiments.
More generally, we assume $\tau$ is sufficient small so that $\sdet < s$.

Once we have obtained the deterministic indices, we need to sample the remaining rows randomly
as described in \cref{sec:comb-determ-rand} for hybrid sampling.
Because we will not have explicit access to every sample's probability to rescale, 
we use \emph{rejection sampling}.
Suppose $\xi_j$ be the $j$th random sample, sampled according to the original probabilities in $\Vp$.
We reject the random sample $\xi_j$ if $\xi_j \in \DetSet$ and resample until $\xi_j \not \in \DetSet$.
This yields that the probability of selection $\xi_j = p_i / (1-\pdet)$, as desired.
We continue to sample in this manner until we have $s_{\rnd} = s - s_{\det}$ successful random samples.

The problem of determining which combination of independent random variables has a probability greater than a set threshold has also been studied in the context of cryptography.  Here a key is defined to be a string of the form $X_1, \ldots, X_m$ where each entry $X_i$ can take on one of $n$ values.  We are also supplied with a matrix of probabilities $\Mx{M}$ such that $M_{ij} = \Prob[X_i = j]$ and the assignment of each $X_i$.  The problem of key rank asks for a given probability $p$, how many keys have probability greater than $p$ and has been addressed in \cite{Glowacz15} and \cite{Martin15}.  If the key assignments with probability greater than $p$ are also to be returned, the problem is called key enumeration which has been addressed in \cite{Poussier16}.  Thus, key rank is similar to our problem of calculating $\sdet$ and key enumeration to finding the members of $\DetSet$.

\section{Alternating randomized least squares with leverage score sampling}
\label{sec:altern-rand-least}

In this section, we explain how all the algorithm components come together.
The sampling procedure to find the indices and weights (i.e., $\Om$) to
construct the reduced system is detailed in \cref{sec:sampl-krp-algor}.
Note that we avoid forming $\Om$ explicitly. Instead, we form $\Z~ \equiv \Om\Z$ and $\Xmat'~ \equiv \Om\Xmat'$ directly. %
The full CP algorithm that cycles through all modes of the tensors
and uses randomized sampling with the leverage scores is given in \cref{sec:full-algorithm}.
The computations are extremely efficient, and memory movement to extract the right-hand-side from the large tensor
$\X$ actually dominates cost in practice. We explain our method
for reducing those costs in \cref{sec:effic-sampl-from}.
Finally, the fit calculation is generally too expensive to compute
exactly for tensors with billions of nonzeros, so
we estimate the fit as described in \cref{sec:estimating-fit}.

\subsection{Finding indices and weights}
\label{sec:sampl-krp-algor}

The first and most important step is identifying rows and associated weights for inclusion in the random reduced subproblem.
\Cref{alg:skrplev} outlines procedure \textsc{SkrpLev} for finding these.
The inputs are the normalized leverage scores for each factor matrix ($\pk = \V{\ell}(\Ak)/r$ for $k=1,\dots,d$),
the number of samples ($s$), and the deterministic threshold ($\tau$).
A few notes are in order.
\begin{algorithm}[t]
  \caption{Hybrid Deterministic and Random  KRP Indices by Leverage Score}
  \label{alg:skrplev}
  \begin{algorithmic}[1]\small%
    \Function{\textsc{SkrpLev}}{$\pk{1},\dots,\pk{d}, s, \tau$}
    \Comment{$\pk \equiv \V{\ell}(\Ak)/r$}
    \State\label{stp:didx}%
    $(\idxdet, \sdet, \pdet) \gets \DIDX(\pk{1},\dots,\pk{d},\tau)$
    \Comment Find $\set{i \in [N] | p_i > \tau}$
    \State $\srnd \gets s - \sdet$
    \State\label{stp:sidx}%
    $(\idxrnd, \wgtrnd) \gets \SIDX(\pk{1},\dots,\pk{d},\srnd,\tau,\pdet)$
    \Comment See \cref{alg:skrpidx}
    \State\label{stp:cidx}%
    $(\idxrnd, \wgtrnd, \srnd*) \gets \CIDX(\idxrnd, \wgtrnd)$
    \Comment{\emph{After} rejection sampling}
    \State $\idx \gets \textsc{cat}(\idxdet, \idxrnd)$
    \State $\wgt \gets \textsc{cat}(\mathbf{1}_{\sdet}, \wgtrnd)$    
    \Comment Weights for deterministic indices is 1
    \State $s \gets \sdet + \srnd*$
    \State \Return{$(\idx, \wgt, s)$}
    \Comment{Return indices and weights}
    \EndFunction
  \end{algorithmic}
\end{algorithm}
\begin{itemize}[leftmargin=0.5cm]
\item Function $\DIDX$, called in \cref{stp:didx}, computes hybrid deterministic indices
  \begin{displaymath}
    \idxdet = \Set{i \in [N] | p_i = \prod_{k=1}^d \pke > \tau},
    \;
    \sdet = | \idxdet |,
    \text{ and }
    \pdet = \sum_{i \in \idxdet} p_i.
  \end{displaymath}
  without explicitly computing all the probabilities, per \cref{sec:impl-determ-row}.
  We assume that $\sdet < s$. If not, we take the $s$ highest probabilities that are found.
  Setting $\tau = 1$ means that no samples are included deterministically ($\idxdet = \emptyset, \sdet=0, \pdet=0$).
\item Function $\SIDX$, called in \cref{stp:sidx}, is detailed in \cref{alg:skrpidx}.
  It randomly samples indices $i \sim \Multi(\Vp)$ where $p_i = \prod_{k=1}^d \pke$ for all $i\in[N]$.
  Any sample with $p_i > \tau$ is rejected, and  the weights are correspondingly adjusted by
  multiplying them by $\sqrt{1-\pdet}$.
  The same index may be sampled multiple times.
  Our actual implementation samples and rejects indices in bulk, oversampling
  to ensure that we still have at least $\srnd$ indices after the rejection is complete.
\item Function $\CIDX$ combines multiple indices as described in \cref{sec:comb-repe-rows}.
  If row $i$ appeared $c_i$ times in $\idxrnd$, then its weight is scaled by $\sqrt{c_i}$.
  The count $\srnd*$ is the number of \emph{unique} indices in that was produced by $\SIDX$.
\end{itemize}

\begin{algorithm}[t]
  \caption{Random KRP Indices by Leverage Score}
  \label{alg:skrpidx}
  \begin{algorithmic}[1]\small%
    \Function{\SIDX}{$\pk{1},\dots,\pk{d}, \srnd, \tau, \pdet$}
    \Comment{$\pdet \equiv \sum_{p_i > \tau} p_i$}
    \State{$j \gets 0$} 
    \While{$j<\srnd$}
    \For{$k=1,\dots,d$}
    \Comment{Sample random index $i \equiv (i_1,\dots,i_k) \in [N]$}
    \State{$i_k \gets \Multi(\pk)$}
    \EndFor
    \State $p_i \gets \prod_{k=1}^d \pke$
    \If{$p_i \leq \tau$} \Comment{Reject if $p_i > \tau$}
    \State{$\idxrnd(j) \gets i$} 
    \State{$\wgtrnd(j) \gets \sqrt{ \frac{1-\pdet}{p_i \, \srnd }  }$}
    \Comment{Weight adjusted for rejected indices}
    \State{$j \gets j+1$}
    \EndIf
    \EndWhile
    \State \Return{$(\idxrnd, \wgtrnd)$}
    \EndFunction
  \end{algorithmic}
\end{algorithm}

\subsection{Full algorithm}
\label{sec:full-algorithm}

The CP tensor decomposition of rank $r$ for an order-$(d+1)$ tensor
is defined by $(d+1)$ factor matrices $\Ak{1},\dots,\Ak{d+1}$ that minimizes
the sum of the squares error between the data tensor $\X$ and CP model $\M$.
We use the shorthand $\M = \KT$ where
$m(i_1,\dots,i_{d+1}) = \sum_{j=1}^r \prod_{k=1}^d a_{k}(i_k,j)$.
The standard \cpals algorithm solves for each factor matrix in turn (inner iterations), keeping the others fixed.
Each least squares problem is of the form shown in \cref{eq:lsq_krp}. Although \cref{eq:lsq_krp}
is specific to solving for $\Ak{d+1}$, this is really just a notational convenience.
Each outer iteration, 
we compute the proportion of the data described by the model, i.e., 
\begin{equation}
  \label{eq:fit}
  \text{fit} = 1 - \frac{ \| \X - \M \|}{\| \X \|}.
\end{equation}
The method halts when the fit ceases to improve by at least $10^{-4}$.
We refer the reader to \cite{KoBa09} for further details and references on \cpals.

Our randomized variant \cprandlev is presented in \cref{alg:cp-arls-lev}.
The inputs are the order-$(d+1)$ tensor, $\X$; the desired rank, $r \in \mathbb{N}$;
the number of samples for each least squares problem, $s \in \mathbb{N}$;
the deterministic cutoff, $\tau \in [0,1]$ (defaults to 1 for random and $1/s$ for hybrid);
the number of outer iterations per epoch, $\eta \in \mathbb{N}$ (which defaults to 5);
the number of failed epochs allowed before convergence, $\pi$ (which defaults to 3);
and the initial guesses for the factor matrices.
We group the iterations into epochs of $\eta$ outer iterations
since the method does not necessarily improve with every step due to the randomness.
Further, we may not want to quit until the fit fails to improve for $\pi$ epochs.
In many cases, computing the fit exactly would be to expensive,
so we use the approximate fit as documented in \cref{sec:estimating-fit}.

We presented the canonical least squares problem in \cref{eq:lsq_krp} in terms of the specific least squares problem for mode $d+1$, but
the \cpals method requires that we solve such a problem for every mode. This is an important implementation detail
but does not otherwise require any change in thinking.
At inner iteration $k$, for instance, we can call the \textsc{SkrpLev} methods with
$d$ leverage scores vectors --- the only difference is that we leave out the $k$th vector
of leverage scores rather than the $(d+1)$st.
We let $\bar s$ denote the actual number of sampled rows, which may be less than $s$ due to
combining repeated rows.
The function \textsc{KrpSamp} builds the sampled KRP matrix given the factor matrices, indices of the rows to be combined,
and corresponding weights. The work to construct $\Z~$ is $O(\bar s d r)$.
The function \textsc{TnsrSamp} extracts the appropriate rows of the unfolded matrix as described in \cref{sec:effic-sampl-from}.
(Note that we do not explicitly provide algorithms for \textsc{KrpSamp} or \textsc{TnsrSamp}.)
The solution of the least squares problem costs $O(\bar s r^2 + \bar s n r)$, where again the $O(\bar s n r)$ complexity can be reduced by exploiting the sparsity of the sampled tensor.
We use the same $s$ for every mode of the tensor, and making $s$ mode dependent is a topic for future work; see, e.g., \cite{AgGiYe20}.
The end-to-end complexity is presented in \appcomplexity.

\begin{algorithm}[t]
  \caption{CP via Alternating Randomized Least Squares with Leverage Scores}
  \label{alg:cp-arls-lev}
  \begin{algorithmic}[1]\small
    \Function{\cprandlev}{$\X$, $r$, $s$, $\tau$, $\eta$, $\pi$, $\texttt{tol}$, $\set{\Ak}$}
    \For{$k=1,\dots,d+1$}
    \State $\pk \gets \V{\ell}(\Ak)/r$ \Comment{Compute scaled leverage scores for initial guess}
    \EndFor
    \Repeat
    \For{$\ell=1,\dots,\eta$} \Comment{Group outer iterations into epochs}
    \For{$k=1,\dots,d+1$} \Comment{Cycle through tensor modes}
    \State $(\idx,\wgt,\bar s) \gets \textsc{SkrpLev}(\pk{1},\dots,\pk{k-1},\pk{k+1},\dots,\pk{d+1}, s, \tau)$
    \Comment{$\bar s \leq s$}
    \State $\Z~ \gets \textsc{KrpSamp}(\Ak{1},\dots,\Ak{k-1},\Ak{k+1},\dots,\Ak{d+1},\idx,\wgt)$
    \Comment $\Z~ \in \Real^{\bar s \times r}$
    \State $\Xmat~ \gets \textsc{TnsrSamp}(\X,k,\idx,\wgt)$
    \Comment $\Xmat~ \in \Real^{\bar s \times n_k}$
    \State $\Ak \gets \argmin_{\B \in \Real^{n_k \times r}} \| \Z~ \B' - \Xmat'~ \|$
    \State $\pk \gets \V{\ell}(\Ak)/r$
    \EndFor
    \EndFor
    \State Compute \texttt{fit} (exact or approximate) \Comment{Computed only after each epoch}
    \Until{\texttt{fit} has not improved by more than $\texttt{tol}$ for $\pi$ subsequent epochs}
    \State \Return{$\KT$}
    \EndFunction
  \end{algorithmic}
\end{algorithm}

\subsection{Efficient sampling from sparse tensor}
\label{sec:effic-sampl-from}

A final consideration for efficiency is quickly compiling the right hand side, $\Xmat~$.
Recall that $\Xmat$ is the $(d+1)$-mode unfolding of the $(d+1)$-way tensor $\X$.
The tensor $\X$ is sparse, so we store only the nonzeros. We use coordinate format which stores the
coordinates $(i_1, \dots, i_{d+1})$ and value $x_{i_1\dots,i_{d+1}}$ for each nonzero \cite{BaKo07},
for a total storage of $(d+2) \nnz(\X)$ for a $(d+1)$-way tensor $\X$.

The mode-$(d+1)$ unfolding of $\X$ produces a matrix of size $n \times N$ where $N = \prod_{k=1}^d n_k$ and $n = n_{d+1}$.
We need to select and reweight the $s$ columns of $\Xmat$ (rows of $\Xmat'$) that correspond to the selected rows of $\Z$, per \cref{alg:skrplev}.
In this way, we can quickly build the sparse matrix $\Xmat~$ as follows:
\begin{displaymath}
  \Xmat~(i,j) =
  \begin{cases}
    \wgt(j) \, x(i_1,\dots,i_d,i_{d+1}) & \text{if } \idx(j) = (i_1,\dots,i_d) \text{ and } i = i_{d+1} \\
    0 & \text{otherwise}.
  \end{cases}
\end{displaymath}
We typically store the entries of $\idx$ as linear indices, so, 
for efficiency, we recommend to precompute and store the linearized indices corresponding to $(i_1,\dots,i_d)$.
Further, we operate on all $(d+1)$ modes, so these should be precomputed for every mode. This
requires $(d+1) \nnz(\X)$ additional storage.

\subsection{Estimating the fit}
\label{sec:estimating-fit}

The tensor fit \cref{eq:fit} is used in the stopping condition for \cprandlev.
Unfortunately, calculating the fit costs $O(r\nnz(\X))$ and can therefore take many times longer than an epoch.
Instead, we \emph{estimate} the fit based on a random sample of tensor elements as in \cite{BaBaKo18}.
Since we are focused on sparse tensors, we have the additional difficulty that a uniform sample returns predominantly zero entries, by the definition of sparsity, and is likely to lead to an inaccurate estimate of the fit.  
To correct this, we use a technique called stratified sampling to sample a specified proportion of zero and nonzero entries \cite{KoHo20}.
Let $s_{\text{fit}}$ be the user-specified number of samples
and $\alpha \in [0,1]$ be the fraction of nonzero elements (by default we use $\alpha = 0.5$).  
We sample $\lceil \alpha\sfit \rceil$ nonzero entries  uniformly at random and $\lfloor (1-\alpha)\sfit \rfloor$ zero entries elements uniformly at random.
The zeros are selected via \emph{rejection sampling} as described in \cite{KoHo20}.
The result is a set of $s_{\text{fit}}$ linear indices, denoted by $\mathcal{F}$.
If we define $F = \| \X - \M \|^2$, then the fit \cref{eq:fit} is $1 - \sqrt{F}/\|\X\|$. We estimate $F$ as
\begin{equation}
  \widehat{F} = \sum_{i \in \mathcal{F}} \phi_i (m_i - x_i)^2
  \qtext{where}
  \phi_i =
  \begin{cases}
    \nnz(\X) / \lceil \alpha\sfit \rceil & \text{if } x_i \neq 0, \\
    (n^{d+1} - \nnz(\X)) / \lfloor (1-\alpha)\sfit \rfloor & \text{if } x_i = 0,
  \end{cases}
\end{equation}
where $m_i$ and $x_i$ are the $i$th entries of $\M$ and $\X$, respectively. The $\M$ is updated at each step of the algorithm.
For the estimated fit, we use $1 - \sqrt{\widehat{F}}/\|\X\|$.
We sample the elements of the tensor once at the beginning of the algorithm and then use that sample for all subsequent estimates.

\section{Numerical results}
\label{sec:numerical-results}
We present experiments to demonstrate the
benefits of combining repeated rows in sketching algorithms (\cref{sec:exp-comb-repeat}),
differences between random and hybrid sampling (\cref{sec:hybrid-rand}), 
comparisons between our proposed and standard methods
(\cref{sec:uber,sec:amazon,sec:reddit}), and example factors for a massive tensor (\cref{sec:reddit}).
The methods we compare are
\begin{itemize}[leftmargin=0.5cm]
\item \textbf{Random}: Proposed CP-ARLS-LEV method (\cref{alg:cp-arls-lev}) using $\tau=1$ (i.e., no deterministic inclusion of rows), $\eta = 5$, $\pi = 3$, and $\texttt{tol} = 10^{-4}$
\item \textbf{Hybrid}: Proposed CP-ARLS-LEV method, same as above except $\tau=1/s$ (i.e., high-probability rows included deterministically)
\item \textbf{Standard}: CP-ALS method with standard settings, including $\texttt{tol}=10^{-4}$
\item \textbf{SPALS}: Our implementation of the method proposed in \cite{ChPePeLi16}. We mirror the settings for CP-ARLS-LEV as much as possible, using $\eta=5$, $\pi = 3$, and $\texttt{tol} = 10^{-4}$. 
\end{itemize}

\begin{table}
  \centering\footnotesize
  \begin{tabular}{|l|l|l|l|}
    \hline
    \bf Name & \bf Size & \bf Nonzeros & \bf Density \\ \hline
    Uber &  183 $\times$ 24 $\times$ 1,140 $\times$ 1,717 &  3,309,490 & 0.038\% \\ %
    Amazon & 4,821,207 $\times$ 1,774,269 $\times$ 1,805,187 & 1,741,809,018 & 1.1 $\times 10^{-8}$\% \\ %
    Reddit & 8,211,298 $\times$ 176,962 $\times$ 8,116,559 & 4,687,474,081 & 4.0 $\times 10^{-8}$\% \\ \hline
  \end{tabular}
  \caption{Large-scale tensors used in experiments}
  \label{tab:tensors}
\end{table}

\noindent
The experiments are based on three large-scale sparse tensors from FROSTT \cite{frosttdataset}, the largest having nearly 5B nonzeros,
whose sizes are given in \cref{tab:tensors}. Briefly, we describe the tensors below. 
\begin{itemize}[leftmargin=0.5cm]
\item \textbf{Uber}:  4-way count tensor of New York City Uber pickups in April--August 2014,  183 days ${\times}$ 24 hours ${\times}$ 1.1K latitudes ${\times}$ 1.7K longitudes with 3M~nonzeros.
\item \textbf{Amazon}: 3-way count tensor based on user review text of 5M users $\times$ 2M words $\times$ 2M reviews with 2B nonzeros.  
\item \textbf{Reddit}: 3-way log-count\footnote{This tensor has been modified from the raw count tensor provided by FROSTT. Each entry is $\log(c+1)$ where $c$ is the count. Note that the zeros are unchanged. This is a standard weighting in text analysis. The primary effect is that the largest entries are damped.} tensor of social network postings of 8M users $\times$ 200K subreddits $\times$ 8M words with 5B nonzeros.
\end{itemize}

\noindent
Factor matrices are initialized by drawing each entry from a standard Gaussian distribution.
We explore the randomized range finder initialization in \apprrf.

All experiments were run using MATLAB (Version 2018a) using the Tensor Toolbox for MATLAB \cite{TTB_Software}.
We used three computational environments: (A) a Dual Socket Intel E5-2683v3 2.00~GHz CPU (28 total cores) with 256~GB memory, (B) a Dual Socket AMD Epyc 7601 2.20~GHz CPU (64 total cores) with 1~TB memory, and (C) a Dual Socket AMD EPYC 7452 (64 total cores) with 256~GB memory.

As mentioned in some earlier sections, we provide additional results in the appendices and supplements.
Comparisons of leverage-score-based sampling to uniform and length-squared-based sampling is provided in \appsampling.
Comparisons to CP-ARLS for dense
tensors  \cite{BaBaKo18} is provided in \appdense.

\subsection{Combining repeated rows}
\label{sec:exp-comb-repeat}

We show that combining repeated rows (\cref{sec:comb-repe-rows}) can significantly decrease the time dedicated to solving the sampled system.
This does not change the sampling methodology nor the solution; instead, it merely introduces a preprocessing step that removes redundancies in the linear system. 
We rarely see repeated rows with random factor matrices since they are incoherent,
but we often see repeated rows as we converge toward a solution because the leverage scores are skewed.

\pgfplotstabletranspose[input colnames to={IDs}, colnames from={IDs}]{\amazonrepeat}{data-amazon-repeated-rows.csv}
\pgfplotstabletranspose[input colnames to={IDs}, colnames from={IDs}]{\uberrepeat}{data-uber-repeated-rows.csv}
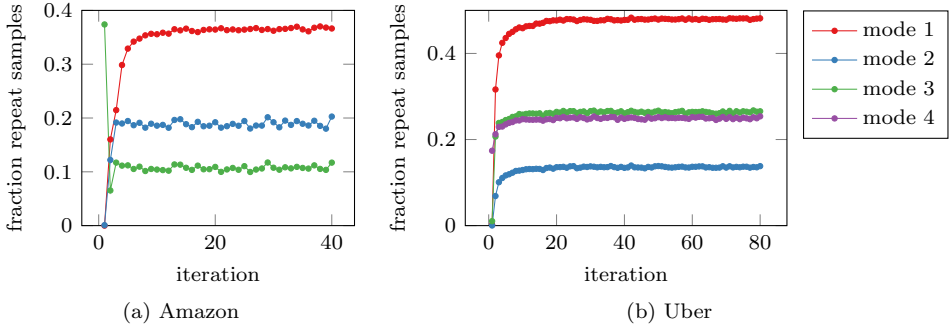
\begin{figure}
  \centering
  \pgfplotsset{
    ymin = 0,
  }
  \subfloat[Amazon]{
    \begin{tikzpicture}[baseline]
      \begin{axis}[
        height=1.75in,width=0.4\textwidth,
        font=\footnotesize,   
        xlabel = iteration,
        ylabel = fraction repeat samples,
        ymax = 0.4,
        unbounded coords=jump,
        mark size=1pt,
        ]
        \addplot+[mark=*,mark size=1pt] table[y=M1_R]{\amazonrepeat};
        \addplot+[mark=*] table[y=M2_R]{\amazonrepeat};
        \addplot+[mark=*] table[y=M3_R]{\amazonrepeat};
      \end{axis}
    \end{tikzpicture}~~}%
  \subfloat[Uber]{
    \begin{tikzpicture}[baseline]
      \begin{axis}[
        height=1.75in,width=0.45\textwidth,
        font=\footnotesize,   
        xlabel = iteration,
        ylabel = fraction repeat samples,
        ymax = 0.5,
        unbounded coords=jump,
        legend style={at={(1.05,1)},anchor=north west},
        mark=*,
        mark size=1pt,
        ]
        \addplot+[mark=*] table[y=M1_R]{\uberrepeat};
        \addplot+[mark=*] table[y=M2_R]{\uberrepeat};
        \addplot+[mark=*] table[y=M3_R]{\uberrepeat};
        \addplot+[mark=*] table[y=M4_R]{\uberrepeat};
        \legend{mode 1,mode 2, mode 3, mode 4}
      \end{axis}
    \end{tikzpicture}~~}%
  \caption{Fraction of repeated row samples, i.e., $(s -\bar s)/s$, at each iteration and for each mode of a single run of CP-ARLS-Lev (Random) on two tensors
    using $s = 2^{17}$ rows.}
  \label{fig:repeated-rows}
\end{figure} 

\Cref{fig:repeated-rows} demonstrates the prevalence of repeated rows as the factor matrices converge towards a solution.
We run CP-ARLS-Lev (Random) once each for Amazon and  Uber, recording the proportion of repeats for each mode in each iteration.
The fraction repeats becomes substantial after only a few iterations, motivating
the combination of repeat rows.

\pgfplotstabletranspose[input colnames to={IDs}, colnames from={IDs}]{\amazoncombine}{data-amazon-combinetime.csv}
\pgfplotstabletranspose[input colnames to={IDs}, colnames from={IDs}]{\amazoncombine}{data-amazon-combinetime.csv}
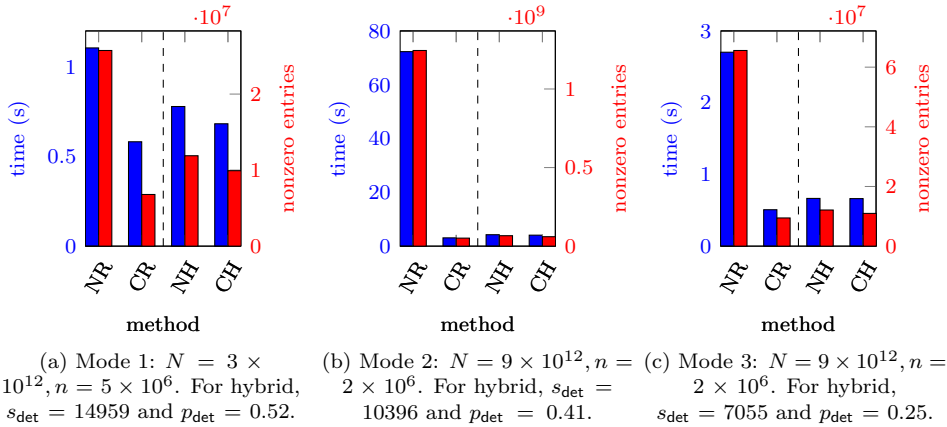
\begin{figure}
  \centering
  \pgfplotsset{
    xticklabels = {NR,CR, NH, CH},
    boxplot/draw direction = y,
    height=1.75in,width=0.28\textwidth,
    ymin = 0,
    xtick = {1, 2, 3, 4},
    xticklabel style = {align=center, font=\footnotesize, rotate=60},
    xlabel = method,
  }
  \subfloat[Mode 1: $N=3 \times 10^{12}, n=5 \times 10^6$. For hybrid, $\sdet = 14959$ and $\pdet = 0.52$.]{
  \begin{tikzpicture}[baseline]
    \begin{axis}[
    font=\footnotesize,
    bar width = 5 pt,
    bar shift = - 2.5 pt,
    every y tick label/.append style={blue}, 
    ytick pos = left, ymax=1.2,
      ylabel = {\textcolor{blue}{time (s)}},
      ]      
      \addplot[ybar,fill=blue] table[y=M1] {\amazoncombine};
      \addplot[color=black, dashed] coordinates {(2.5,0) (2.5,1.2)};
    \end{axis}
    \begin{axis}[
      font=\footnotesize,
      bar width = 5 pt,
      bar shift = 2.5 pt,
      every y tick label/.append style={red}, 
      axis y line* = right,
      ytick pos = right,
      ylabel = {\textcolor{red}{nonzero entries}},
      ]
      \addplot[ybar,fill=red] table[y=M4] {\amazoncombine};
    \end{axis}
  \end{tikzpicture}
  }
  \subfloat[Mode 2: $N=9 \times 10^{12}, n=2 \times 10^6$. For hybrid, $\sdet = 10396$ and $\pdet = 0.41$.]{
    \begin{tikzpicture}[baseline]
    \begin{axis}[
      font=\footnotesize,
      bar width = 5 pt,
      bar shift = - 2.5 pt,
      every y tick label/.append style={blue}, 
      ytick pos = left, ymax=80,
      ylabel = {\textcolor{blue}{time (s)}},
      ]
      \addplot[ybar,fill=blue] table[y=M2] {\amazoncombine};
      \addplot[color=black, dashed] coordinates {(2.5,0) (2.5,80)};
    \end{axis}
    \begin{axis}[
    font=\footnotesize,
      bar width = 5 pt,
      bar shift = 2.5 pt,
      every y tick label/.append style={red}, 
      axis y line* = right,
      ylabel = {\textcolor{red}{nonzero entries}},
      ]
      \addplot[ybar,fill=red] table[y=M5] {\amazoncombine};
    \end{axis}
    \end{tikzpicture}
  }
  \subfloat[Mode 3: $N=9 \times 10^{12}, n=2 \times 10^6$. For hybrid, $\sdet =  7055$ and $\pdet = 0.25$.]{
    \begin{tikzpicture}[baseline]
    \begin{axis}[
    font=\footnotesize,
      bar width = 5 pt,
      bar shift = - 2.5 pt,
      every y tick label/.append style={blue}, 
      ytick pos = left,ymax=3,
      ylabel = {\textcolor{blue}{time (s)}},
      ]
      \addplot[ybar,fill=blue] table[y=M3] {\amazoncombine};
      \addplot[color=black, dashed] coordinates {(2.5,0) (2.5,3)};
    \end{axis}
    \begin{axis}[
      font=\footnotesize,
      bar width = 5 pt,
      bar shift = 2.5 pt,
      every y tick label/.append style={red}, 
      axis y line* = right,
      ylabel = {\textcolor{red}{nonzero entries}},
      ]
      \addplot[ybar,fill=red] table[y=M6] {\amazoncombine};
    \end{axis}
  \end{tikzpicture}
    }
    \caption{Comparing combing rows (C) with not combining (N) for both random (R) and hybrid (H)  sampling schemes for least squares sketching.
      We report both the combined reweight and solution time (blue) and number of nonzeroes in the sampled right-hand side (red), which closely correlate.
      The matrix is of size $N \times 10$, with $n$ right-hand sides, based on modes 1--3 of the Amazon tensor and factor matrices
      from a solution of rank $r=10$. The methods sample $s = 2^{17}$ rows, random uses $\tau=1$, and hybrid uses $\tau = 1/s$.
      The results are averaged across 10 runs.  Note that each mode is on a different scale.}
  \label{fig:amazon-combine}
\end{figure} %

To show the impact of combining rows for solving the least squares problem,
we use the \emph{solution} factor matrices for the Amazon tensor
from a run of \cpals with rank $r = 10$ (producing a final fit of 0.3055),
using computational environment (B).
\Cref{fig:amazon-combine} shows the results for problems of the form in \cref{eq:lsq_krp} with $d=2$, $r=10$,
and different values of $N$ and $n$, for
both the random and hybrid leverage score sampling schemes.
(Note that we have not provided sufficient information here to compare the random and hybrid schemes ---
comparing those two methods is explored in the next subsection.)
First, the time to solution, plotted in blue, is \emph{always} improved by combining rows.
The difference is particularly dramatic for the random method in mode 2.
Second, although all the systems are roughly the same size ($N=O(10^{12} )$ and $n=O(10^6)$),
the solution time depends on the number of nonzeros in the right-hand side, which is plotted in red.
Combing rows also reduces the nonzeros and correlates closely to the improvements in time.
Third, the reductions are much smaller for the hybrid sampling since deterministic inclusion of high-probability rows results in fewer repeats.

In general, the dominant cost for each subproblem is usually in the setup for the least squares solve.
Specifically, extracting the sampled fibers (discussed in \cref{sec:effic-sampl-from}) from the sparse tensor
costs approximately 14s for mode 1, 25s for mode 2, and 11s seconds for mode 3.
In our implementation, this step has small variance in cost between the four methods as we find the unique fiber indices before extracting the relevant fibers of the tensor.
But as mode 2 in our experiments show, without combining repeats or hybrid sampling the solve can eclipse the fiber extraction and greatly increase the overall runtime.

As there is essentially no downside to combing rows,
we recommend that combining rows should be a default in any leverage-score based sampling scheme.
Henceforth, CP-ARLS-LEV (random and hybrid) does so in all experiments.

\subsection{Comparing random and hybrid sampling}
\label{sec:hybrid-rand}

We consider a specific least squares problem to illustrate the differences between random and hybrid sampling.
We use the Uber tensor and solve for the first factor matrix on environment (A),
fixing the factors for modes 2--4 to a solution produced by \cpals.
This corresponds to a least squares problem of the form \cref{eq:lsq_krp} with $N =$ 46,977,120 rows,
$r=10$ columns, and $n=183$ right-hand sides.
The example was chosen for two reasons:
(1) It is small enough to compute the true solution, which would be too expensive for the larger tensors.
(2) It shows a marked difference between random and hybrid sampling because it has concentrated leverage score structure.

\pgfplotstabletranspose[input colnames to={IDs}, colnames from={IDs}]{\ubereps}{data-uber-epsilon.csv}
\begin{figure}[t]
  \centering
  \subfloat[Relative difference versus true residual, solid line indicates median of 10 runs]{\label{fig:uber-resids}
    \begin{tikzpicture}[baseline]
      \begin{axis}[
        height=2in,width=0.32\textwidth,
        font=\footnotesize,
        ymode = log,    
        xlabel = {number of samples, $s$},
        ylabel = {relative difference},
        x label style={at={(0.5,-0.25)}},
        unbounded coords=jump,
        legend pos = north east,
        cycle list={
          {index of colormap=0 of Set1-8,mark=*},
          {index of colormap=1 of Set1-8,mark=*}}
        ]
        \pgfplotsinvokeforeach {1,2} {
          \addplot+[thick, mark=x] table[x=S, y=M#1]{\ubereps};
        }
        \pgfplotsinvokeforeach {1,2} {
          \addplot+[name path={max#1},thin,mark=none] table[x=S, y=M#1_MAX]{\ubereps};
        }
        \pgfplotsinvokeforeach {1,2} {
          \addplot+[name path={min#1},thin,mark=none] table[x=S, y=M#1_MIN]{\ubereps};
        }
        \pgfplotsinvokeforeach {1,2} {
          \addplot+[.!20, fill opacity=0.5] fill between[of = max#1 and min#1];
        }
        \legend{random, hybrid}
      \end{axis}
    \end{tikzpicture}~}%
  \subfloat[Percent of hybrid sample that is deterministic, i.e., $100\cdot\sdet/s$]{\label{fig:uber-drows}~
    \begin{tikzpicture}[baseline]
      \begin{axis}[
          height=2 in,width=0.32\textwidth,
          font=\footnotesize,
          xlabel = {number of samples, $s$},
          ylabel = {deterministic percent},
          ymin = 0,
          /pgf/bar width=5pt,
          xtick = {1, 3, 5, 7, 9, 11, 13},
          xticklabel style = {align=center, font=\footnotesize, rotate=45},
          xticklabels = {$2^{7}$,$2^{9}$,$2^{11}$,$2^{13}$,$2^{15}$,$2^{17}$,$2^{19}$},
          ]
          \addplot[ybar,fill=blue] table[y=SDET] {\ubereps};
        \end{axis}   
      \end{tikzpicture}~}%
    \subfloat[Deterministic probability, \\ i.e., $\pdet = \sum_{p_i\geq \tau} p_i$]{\label{fig:uber-dprop}~
      \begin{tikzpicture}[baseline]
        \begin{axis}[
          height=2 in,width=0.32\textwidth,
          font=\footnotesize,
          xlabel = {number of samples, $s$},
          ylabel = {deterministic probability},
          ymin = 0,
          /pgf/bar width=5pt,
          xtick = {1, 3, 5, 7, 9, 11, 13},
          xticklabel style = {align=center, font=\footnotesize, rotate=45},
          xticklabels = {$2^{7}$,$2^{9}$,$2^{11}$,$2^{13}$,$2^{15}$,$2^{17}$,$2^{19}$},
          ]
          \addplot[ybar,fill=blue] table[y=PDET] {\ubereps};
        \end{axis}
      \end{tikzpicture}}
    \caption{Single least squares problem with $N =$ 46,977,120 rows, $r=10$ columns, and $n=183$ right-hand sides,
      corresponding to solving for the first factor matrix in the Uber problem ($d=3$).
      Random uses $\tau=1$ and hybrid uses $\tau=1/s$.
  }
  \label{fig:uber-epsilon}
\end{figure} %

\Cref{fig:uber-resids} shows the relative residual difference between the sampled solution
and the exact solution as the number of samples increases from $2^7$ to $2^{19}$.
Specifically, using the notation of \cref{thm:krp-beta}, the y-axis corresponds to
\begin{displaymath}
  \frac{ \left|  \| \Z \B'~* - \Xmat' \|_F^2 - \| \Z \B'* - \Xmat' \|_F^2 \right| } {\max \set{1, \| \Z \B'* - \Xmat' \|_F^2}}.
\end{displaymath}
We compare random sampling method and the hybrid-deterministic sampling with $\tau = 1/s$.
For each number of samples, we solve the least squares problem 10 times, and the error bars indicate the range of values
obtained while the solid line denotes the median.
Note that the maximum number of samples, $s=2^{19}$, represents only 1.1\% of the rows in the matrix and achieves an accuracy of $10^{-4}$.
For this problem, hybrid sampling clearly improves over random sampling, obtaining approximately 2 more digits of accuracy for $s=2^{19}$ samples.
\Cref{fig:uber-drows} show the fraction of the hybrid sample that is deterministically included ($\sdet/s$), which peaks at less than 20\%.
\Cref{fig:uber-dprop} shows the fraction of the
total sampling probability contained in these samples ($\pdet$), which goes as high as 90\%.
Note that these values are the same across all runs as they are deterministic based on the threshold $\tau$.
If we look at the case of $s=2^{19}$, this means that 15\% of the rows that are included deterministically
would account for approximately 90\% of the sampled rows in the random method.
Hence, we are getting better ``sample efficiency'' in terms of the number of unique samples
with the hybrid method, but the trade-off is added computational cost
in terms of computing $\pdet$ (can be somewhat expensive) and the rejection sampling (relatively inexpensive).
The next sections explore some tradeoffs between these approaches.

\subsection{Equivocal performance on small tensor}
\label{sec:uber}

We consider the rank $r=25$ CP tensor decomposition using matrix sketching on the Uber tensor.
This is a relatively small tensor, so we can compute its fit exactly.

\pgfplotstabletranspose[input colnames to={IDs}, colnames from={IDs}]{\uberraw}{data-uber-fittrace-raw.csv}
\pgfplotstabletranspose[input colnames to={IDs}, colnames from={IDs}]{\uberitp}{data-uber-fittrace-interpl.csv}
\pgfplotstabletranspose[input colnames to={IDs}, colnames from={IDs}]{\ubertime}{data-uber-totaltimes.csv}
\pgfplotstabletranspose[input colnames to={IDs}, colnames from={IDs}]{\uberfit}{data-uber-finalfits.csv}
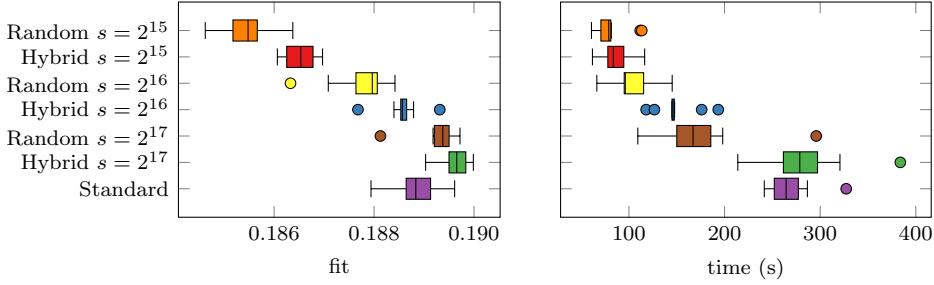
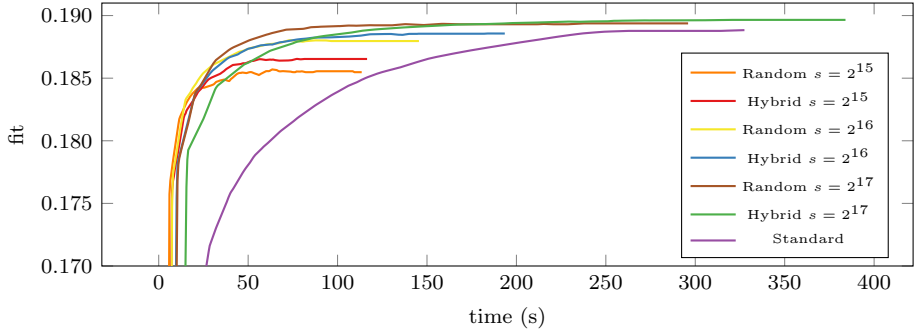
\begin{figure}[t]
  \centering
  \subfloat[Box plot of final fit and total time over 10 runs.]%
  {\label{fig:uber-fit-time}%
    \begin{tikzpicture}
  \begin{axis}[
    boxplot/draw direction = x,
    y dir=reverse,
    height=1.75in,width=0.45\textwidth,
    font=\footnotesize,
    xlabel = fit,
    xticklabel style={%
       /pgf/number format/.cd,
           fixed,
           fixed zerofill,
           precision=3,
           },
    cycle list={[indices of colormap={4, 0, 5, 1, 6, 2, 3 of Set1-8}]},
    ytick = {1, 2, 3, 4, 5, 6, 7},
    yticklabel style = {align=center, font=\footnotesize}, %
    yticklabels = {Random $s=2^{15}$,Hybrid $s=2^{15}$,Random $s=2^{16}$, Hybrid $s=2^{16}$, Random $s=2^{17}$,Hybrid $s=2^{17}$, Standard},
    ]
    \foreach \m in {5, 1, 6, 2, 7, 3, 4} {
      \addplot+[boxplot, fill, draw=black] table[y=M\m] {\uberfit};
    }
  \end{axis}
  \begin{axis}[
    boxplot/draw direction = x,
    y dir=reverse,
    height=1.75in,width=0.5\textwidth,
    xshift=2.0in,
    font=\footnotesize,
    xlabel = time (s),
    xticklabel style={%
       /pgf/number format/.cd,
           fixed,
           fixed zerofill,
           precision=0,
           },
    cycle list={[indices of colormap={4, 0, 5, 1, 6, 2, 3 of Set1-8}]},
    ytick = {1, 2, 3, 4, 5, 6, 7},
    yticklabel style = {align=center, font=\footnotesize}, %
    yticklabels = {,,,,,,},
    ]
    \foreach \m in {5, 1, 6, 2, 7, 3, 4} {
      \addplot+[boxplot, fill, draw=black] table[y=M\m] {\ubertime};
    }
  \end{axis}
  \end{tikzpicture}}
  \\
  \subfloat[Median fit (computed exactly) across 10 runs plotted for all methods.
  ]%
  {\label{fig:uber-runs}%
  \begin{tikzpicture}
  \begin{axis}[
    height=2in,width=0.95\textwidth,
    font=\footnotesize,    
    xlabel = time (s),
    ylabel = fit,
    ymin = 0.17, ymax = 0.191,
    yticklabel style={%
       /pgf/number format/.cd,
           fixed,
           fixed zerofill,
           precision=3,
           },
    unbounded coords=jump,
    legend pos = south east,
    legend style={font=\tiny},
    cycle list={    
      {index of colormap=4 of Set1-8,mark=*},
      {index of colormap=0 of Set1-8,mark=*}, 
      {yellow!90!gray},
      {index of colormap=1 of Set1-8,mark=square*},
      {index of colormap=6 of Set1-8,mark=*},
      {index of colormap=2 of Set1-8,mark=*},
      {index of colormap=3 of Set1-8,mark=*},}
    ]
    \foreach \m in {5, 1, 6, 2, 7, 3, 4} {
      \addplot+[thick,mark=none] table [x=TIME, y=M\m_MED]{\uberitp};
    }
    \legend{Random $s=2^{15}$,Hybrid $s=2^{15}$,Random $s=2^{16}$,Hybrid $s=2^{16}$,Random $s=2^{17}$,Hybrid $s=2^{17}$,Standard}
  \end{axis}
\end{tikzpicture}}
  
\caption{Comparison of \cprandlev (random and hybrid) with varying number of samples $s \in \set{2^{15},2^{16},2^{17}}$ and \cpals (standard)
  to compute a rank $r=25$ CP decomposition of the
  \textbf{Uber tensor} with 3.3 million nonzeros.
  Random uses $\tau=1$ and hybrid uses $\tau = 1/s$. Each experiment is run 10 times.}
  \label{fig:uber}
\end{figure} %

\Cref{fig:uber} shows results of ten run each for \cprandlev (random and hybrid) for sample sizes
$s \in \set{2^{15},2^{16},2^{17}}$ and \cpals (standard), run in computational environment (A).
\Cref{fig:uber-fit-time} presents box plots to compare the final fit and total
run time; \cref{fig:uber-runs} shows the median performance for each method;
individual runs are shown in \appuber.
For each value of $s$, hybrid deterministic sampling
improved the median final fit as compared to random, but hybrid is slower than random.
For $s=2^{17}$, both randomized algorithms
achieve a slightly better final fit than the standard method.
In this case, the random method is substantially faster than the standard method, but
the hybrid method is somewhat slower.
As this is a relatively small problem, we did not expect much improvement in time but instead use this
to demonstrate the correctness of the sampling method and how it improves as the number of samples increases.

\subsection{Order of magnitude speed improvement on massive sparse tensors}
\label{sec:amazon}

\pgfplotstabletranspose[input colnames to={IDs}, colnames from={IDs}]{\amazonraw}{data-amazon-fittrace-raw.csv}
\pgfplotstabletranspose[input colnames to={IDs}, colnames from={IDs}]{\amazonitp}{data-amazon-fittrace-interpl.csv}
\begin{figure}
  \centering
    \subfloat[Statistics for 5 runs.  Total time and speedup do not include finding the true fit for runs of the random and hybrid methods, which was only done to compare to the standard method.]%
  {\label{tab:amazon-time}  \footnotesize
  \begin{tabular}{|c|c|c|c|c|c|}
    \hline
    & \multicolumn{1}{|c|}{\bf Mean}
    &
    & \multicolumn{1}{c|}{\bf Time Per} &
    \multicolumn{1}{c|}{\bf Median} &
    \multicolumn{1}{c|}{\bf Best} \\ 
    \multicolumn{1}{|c|}{\bf Method} &
    \multicolumn{1}{|c|}{\bf Time (s)} &
    \multicolumn{1}{c|}{\bf Speedup} &
    \multicolumn{1}{c|}{\bf Epoch (s)} &
    \multicolumn{1}{c|}{\bf Fit} &
    \multicolumn{1}{c|}{\bf Fit} \\ 
    \hline
Hybrid $s = 2^{16}$ & 2.21e+03 &  8.42 &  220.8 & 0.3384 & 0.3388 \\
Hybrid $s = 2^{17}$ & 2.13e+03 &  8.74 &  249.7 & 0.3392 & 0.3398 \\
Random $s = 2^{16}$ & 1.38e+03 & 13.46 &  209.5 & 0.3374 & 0.3380 \\
Random $s = 2^{17}$ & 1.47e+03 & 12.64 &  231.6 & 0.3385 & 0.3390 \\
SPALS $s = 2^{16}$  & 7.60e+03 &  2.45 & 1069.9 & 0.3349 & 0.3353 \\
SPALS $s = 2^{17}$  & 8.34e+03 &  2.23 & 1210.6 & 0.3369 & 0.3377 \\
\hline
Standard       & 1.86e+04 &  1.00 & N/A & 0.3393 & 0.3397 \\
    \hline
  \end{tabular}}
\\
\subfloat[Individual runs with (bias-corrected) estimated fit plotted for hybrid and true fit for standard and SPALS.
The dotted lines represent the individual runs with
  markers indicating epoch (5 iterations) for the randomized methods and one iteration for the standard method.
  The solid lines show  medians.]%
  {\label{fig:amazon-runs}\begin{tikzpicture}
  \begin{axis}[
    height=2in,width=\textwidth,
    font=\footnotesize,    
    xlabel = time (s),
    ylabel = fit,
    ymin = 0.33, ymax = 0.3405,
    xmin = 0.0,
    yticklabel style={%
       /pgf/number format/.cd,
           fixed,
           fixed zerofill,
           precision=3,
           },
    unbounded coords=jump,
    legend pos = south east,
    cycle list={
      {index of colormap=2 of Set1-8,mark=*},
      {index of colormap=1 of Set1-8,mark=*}, 
      {index of colormap=6 of Set1-8,mark=*}, 
      {index of colormap=4 of Set1-8,mark=square*},
      {index of colormap=3 of Set1-8,mark=*}}
    ]
    \foreach [count=\nn] \m in {1,2,5,6,7} {
      \pgfmathparse{\nn-1}
      \pgfplotsset{cycle list shift=\pgfmathresult}      
      \addplot+[thick,mark=none, forget plot ] table [x=TIME, y=M\m_MED]{\amazonitp};
    }
    \foreach \i in {1,...,5} {
      \foreach \m in {1,2,5,6,7} {        
        \addplot+[mark size=.4pt, very thin] table [x=M\m_R\i_Time, y=M\m_R\i_Fit]{\amazonraw};
      }
    }
    \legend{Hybrid $s=2^{16}$,Hybrid $s=2^{17}$,SPALS $s=2^{16}$,SPALS $s=2^{17}$,Standard}
  \end{axis}
\end{tikzpicture}}
 \caption{Comparison of \cprandlev (random and hybrid), SPALS, and \cpals (standard)
   to compute a rank $r=25$ CP decomposition of the \textbf{Amazon tensor} with 1.7 billion nonzeros.
   Each experiment is run 5 times. 
  }
  \label{fig:amazon}
\end{figure}
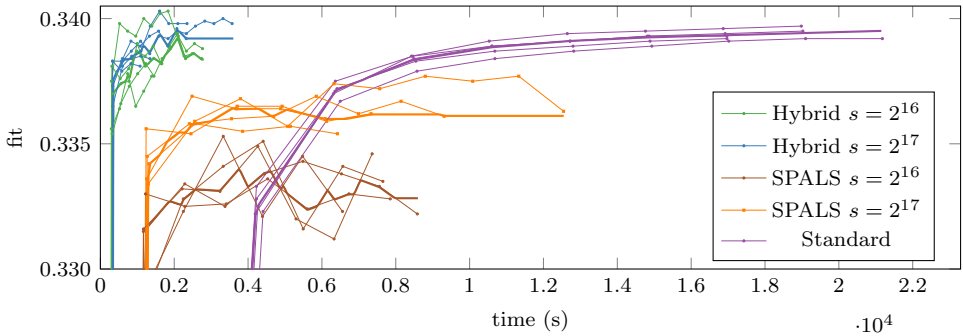 %

\pgfplotstabletranspose[input colnames to={IDs}, colnames from={IDs}]{\redditraw}{data-reddit-fittrace-raw.csv}
\pgfplotstabletranspose[input colnames to={IDs}, colnames from={IDs}]{\reddititp}{data-reddit-fittrace-interpl.csv}
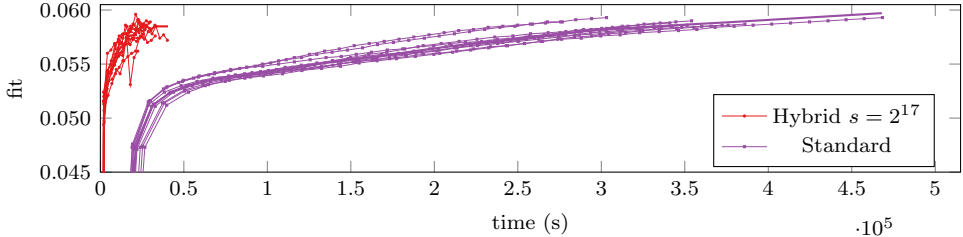
\begin{figure}
  \centering
    \subfloat[Median statistics and best fit across 10 runs.  Total time and speedup do not include finding the true fit for runs of the randomized methods, which was done to compare to the Standard method.]%
  {\label{tab:reddit-time}  \footnotesize
  \begin{tabular}{|c|c|c|c|c|c|}
    \hline
    & \multicolumn{1}{|c|}{\bf Mean}
    &
    & \multicolumn{1}{c|}{\bf Time Per} &
    \multicolumn{1}{c|}{\bf Median} &
    \multicolumn{1}{c|}{\bf Best} \\ 
    \multicolumn{1}{|c|}{\bf Method} &
    \multicolumn{1}{|c|}{\bf Time (s)} &
    \multicolumn{1}{c|}{\bf Speedup} &
    \multicolumn{1}{c|}{\bf Epoch (s)} &
    \multicolumn{1}{c|}{\bf Fit} &
    \multicolumn{1}{c|}{\bf Fit} \\ 
    \hline
    Random $s=2^{17}$ & $2.16 \times 10^4$ & 16.27 & 1832.6 & 0.0585 & 0.0590 \\ 
    Hybrid $s=2^{17}$ & $2.92 \times 10^4$ & 12.00 & 2231.0 & 0.0585 & 0.0589\\     
    \hline
    {Standard} & $3.51 \times 10^5$ & 1.00 & N/A & 0.0588 & 0.0593\\
    \hline
  \end{tabular}}
\\
\subfloat[Individual runs with the bias-corrected estimated fit plotted for \cprandlev and true fit plotted for \cpals.
We omit the random run because it is similar to hybrid.]%
  {\label{fig:reddit-runs}\begin{tikzpicture}
  \begin{axis}[
    height=1.5in,width=\textwidth,
    font=\footnotesize,    
    xlabel = time (s),
    ylabel = fit,
    ymin = 0.045, ymax = 0.0605,
    xmin = 0.0,
    scaled y ticks = false,
    yticklabel style={%
       /pgf/number format/.cd,
           fixed,
           fixed zerofill,
           precision=3,
           },
    unbounded coords=jump,
    legend pos = south east,
    cycle list={
      {index of colormap=0 of Set1-8,mark=*},
      {index of colormap=3 of Set1-8,mark=square*}}
    ]
    \foreach \m in {1,2} {
      \pgfmathparse{\m-1}      
      \pgfplotsset{cycle list shift=\pgfmathresult}
      \addplot+[thick,mark=none, forget plot ] table [x=TIME, y=M\m_MED]{\reddititp};
    }
    \foreach \i in {1,...,10} {
      \foreach \m in {1,2} {        
        \addplot+[mark size=.4pt, very thin] table [x=M\m_R\i_T, y=M\m_R\i_F]{\redditraw};
      }
    }
    \legend{Hybrid $s=2^{17}$,Standard}
  \end{axis}
 \end{tikzpicture}}
  \caption{Comparison of \cprandlev (random and hybrid) with number of samples $s=2^{17}$ and \cpals (standard)
   to compute a rank $r=25$ CP decomposition of the \textbf{Reddit tensor} with 4.68 billion nonzeros.  
   Random uses $\tau=1$ and hybrid uses $\tau = 1/s$. Each experiment is run 10 times. 
  The dotted lines represent the individual runs with
  markers indicating epoch (5 iterations) for the randomized methods and one iteration for the standard method.
  The solid lines show  medians.}
  \label{fig:reddit}
\end{figure} %

This section demonstrates how \cprandlev scales favorably for massive sparse tensors.
\Cref{fig:amazon} shows results for the Amazon tensor, run in computational environment (C),
and \cref{fig:reddit} shows results for the Reddit tensor, run in computational environment (B).
For both tensors, the \cprandlev runs (random and hybrid) use an estimated fit as described in \cref{sec:estimating-fit}
with  $s_{\text{fit}} = 2^{27}$ stratified samples, evenly divided between zeros and nonzeros,
The same sampled
entries are used \emph{across all runs} for consistent comparisons,
and the estimated fit plots are bias corrected
by the difference between the final true fit and final estimated fit, i.e., this difference was subtracted from all data points so that the final fit in the graph of an individual run is equal to the true fit.

The Amazon runs in \Cref{tab:amazon-time} show summary results from five runs each
of \cpals (standard) versus three randomized methods, \cprandlev (random and hybrid) and SPALS, each 
with $s \in \set{2^{16},2^{17}}$ samples.
For Amazon, %
a standard run requires approximately 5 hours.
Random (\cprandlev) is the fastest at less than 25 minutes, achieving a 13X speed-up as compared to
Standard (CP-ALS); however, the median and best fits are approximately 0.5\% worse than the standard method
for the lower number of samples.
Hybrid (\cprandlev) is second fastest at less than 37 minutes, with  an 8X speed-up as compared to the standard method and an improved final fit for $s=2^{17}$ samples.
It is a bit slower than the Random method because of the extra computations to determine $\pdet$ and do extra sampling, but the
advantage is higher fit values.
SPALS is the slowest randomized methods at 2 hours but still has a speedup of more than 2X compared to the standard method.
If we were to implement the ``practical'' improvements used for \cprandlev (combine repeats, hybrid sampling, estimated fit),
then we expect the timing for SPALS would be more competitive.
However, SPALS still has the lowest fit, more than 1\% worse than the standard method.
This is arguably not surprising given SPALS theoretically requires $\log n/\epsilon$ more samples
than \cprandlev.
Each method varies in the number of iterations required to terminate, so we also report the average
epoch time to facilitate comparison. %
We see that the epochs
are always fastest for the random method. 
\Cref{fig:amazon-runs} plots the times versus the fit for Hybrid (\cprandlev ), SPALS, and Standard (\cpals). We omit Random (\cprandlev) because the plots are very similar to Hybrid.

Recall that the complexity for a least squares solve for CP-ALS
with a sparse tensor is $O(\nnz(\X) r)$.
In comparison, the complexity of \cprandlev is $O(sr^2)$ for the QR factorization plus the cost to multiply $\Mx{Q}$-matrix of size $r \times s$  times the sparse matrix $\Mx[\tilde]{X}'$ of size $s \times n$, which costs $O(r\nnz(\Mx[\tilde]{X}))$.
So, we only achieve a speedup if $\max\set{sr,\nnz(\Mx[\tilde]{X})} \ll \nnz(\X)$.
In general, we do not know $\nnz(\Mx[\tilde]{X})$, but we have
some results on the number of nonzeros from \cref{fig:amazon-combine}, which yield
$\nnz(\X)/\nnz(\Mx[\tilde]{X}) \approx O(10)$, which is what we observe in terms of the speed-ups.

The Reddit runs in \Cref{tab:reddit-time} displays statistics for ten runs each
of \cpals (standard) and the randomized methods \cprandlev (random and hybrid)
with $s = 2^{17}$ samples.
For Reddit, we have 6 hours for random and 8 hours for hybrid methods versus 4 days for the standard method, yielding more than a 12X speedup.
As with Amazon, we also report the average
epoch time to facilitate comparison between the random and hybrid methods
and again see that the epochs
are always faster for the random method. 
The best fits are essentially equivalent across all methods.
\Cref{fig:reddit-runs} shows the estimated fit versus time for the hybrid method,
and the true fit versus time for standard method. We omit the random method because the plots look nearly identical to the hybrid method.

\subsection{Example factors for massive tensor}
\label{sec:reddit}

It is useful to show that tensor factors have a meaningful interpretation to prove these
computations are indeed worthwhile.
For this reason, this section shows several examples factors for the Reddit tensor as computed by \cprandlev. The complete set of factors is provided in \appreddit.
Reddit is a community forum wherein users comment within subreddits related to their interests.
The number of users associated with a subreddit can vary widely, with \texttt{r/AskReddit} and \texttt{r/funny} being two are the larger subreddits.
A few notes about the data processing are in order: common stop words were removed and the remaining worked were stemmed (e.g., ``people'' becomes ``peopl'');
users, subreddits, and words with fewer than five entries were removed.

We give some examples of the components computed for this tensor in \cref{fig:reddit-factor6,fig:reddit-factor18,fig:reddit-factor19}.
A component is the collection of matched matrix columns, i.e., component $j$ is $\Ak{1}(:,j)$, $\Ak{2}(:,j)$, and $\Ak{3}(:,j)$.
We cannot show the entirety of any component since the smallest dimension is 176k.
Instead, we show the top-25 highest-magnitude subreddits, the top-25 highest-magnitude words, and the top 1000 highest-magnitude users
as bar charts.
The length of a bar represents the magnitude and the color represents the \emph{overall} prevalence in the data, on a scale of zero to one.
In this manner, blue colors indicate rarer words or subreddits and are of interest since they are less likely to appear in many factors.
\begin{itemize}[leftmargin=0.5cm]
\item \Cref{fig:reddit-factor6} shows component 6 (of 25) which is focused on non-U.S. news. The word factor includes rarer words like countries (stemmed to ``countri'') and world.
One can see the top subreddits include ``worldnews'', ``europe'', ``unitedkingdom'', ``canada'', ``australia'', ``syriancivilwar'', ``india'', ``Israel'', ``UkraineConflict'', and ``Scotland''.
\item \Cref{fig:reddit-factor18} shows component 18 (of 25) focused on soccer and sports.
The top words include  ``player'', ``team'', ``leagu'' (stemmed version of league), ``goal'', ``fan'', and ``club''.
The top subreddits include ``soccer'', ``reddevils'', ``Gunners'', ``FIFA'', ``LiverpoolIFC'', etc.
\item \Cref{fig:reddit-factor19} shows component 19 (of 25) focused on movies and television, with a lean toward science fiction and fantasy.
  The top words include ``movi[e]'', ``film'', ``watch'', and ``charact[er]''.
  The top subreddits include ``movies'', ``television'', ``StarWars'', ``gameofthrones'', ``marvelstudios'', etc.
\end{itemize}

\begin{figure}
  \centering
  \includegraphics[width=0.73\textwidth]{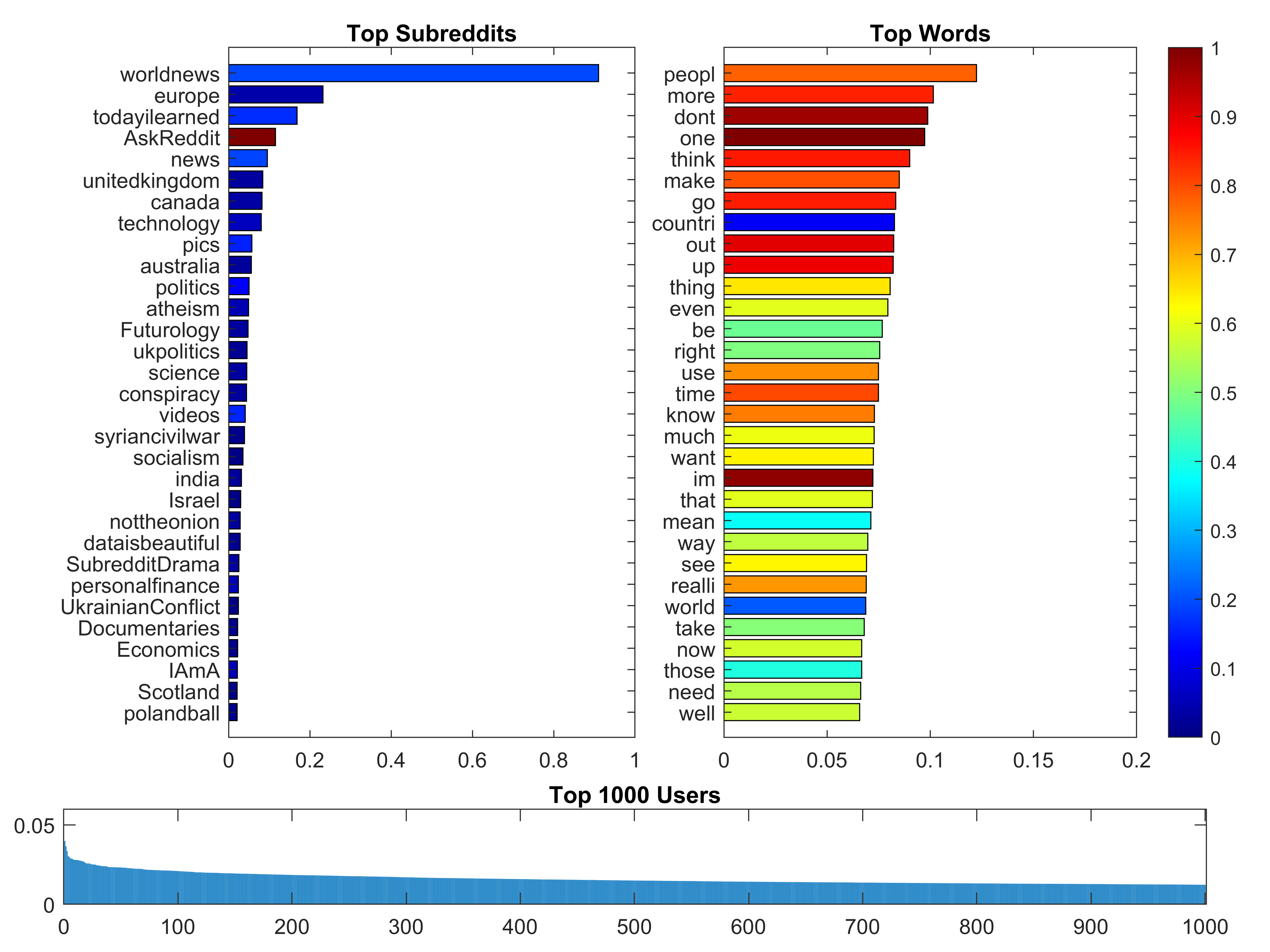}
  \caption{Reddit Factor 6/25: Politics and World News}
  \label{fig:reddit-factor6}
\end{figure} %

\begin{figure}
  \centering
  \includegraphics[width=0.73\textwidth]{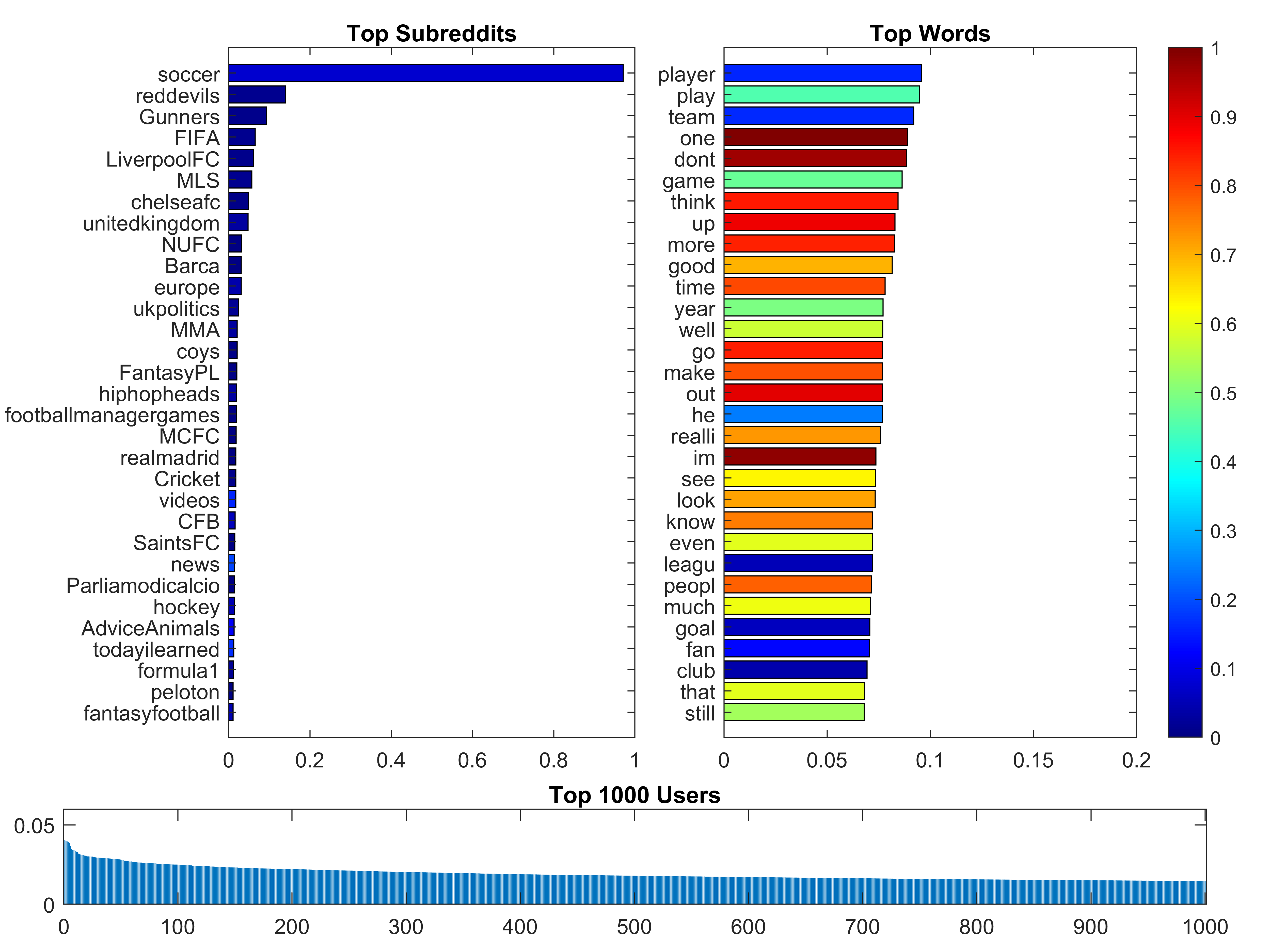}
  \caption{Reddit Factor 18/25: Soccer}
  \label{fig:reddit-factor18}
\end{figure} %

\begin{figure}  
  \centering
  \includegraphics[width=0.73\textwidth]{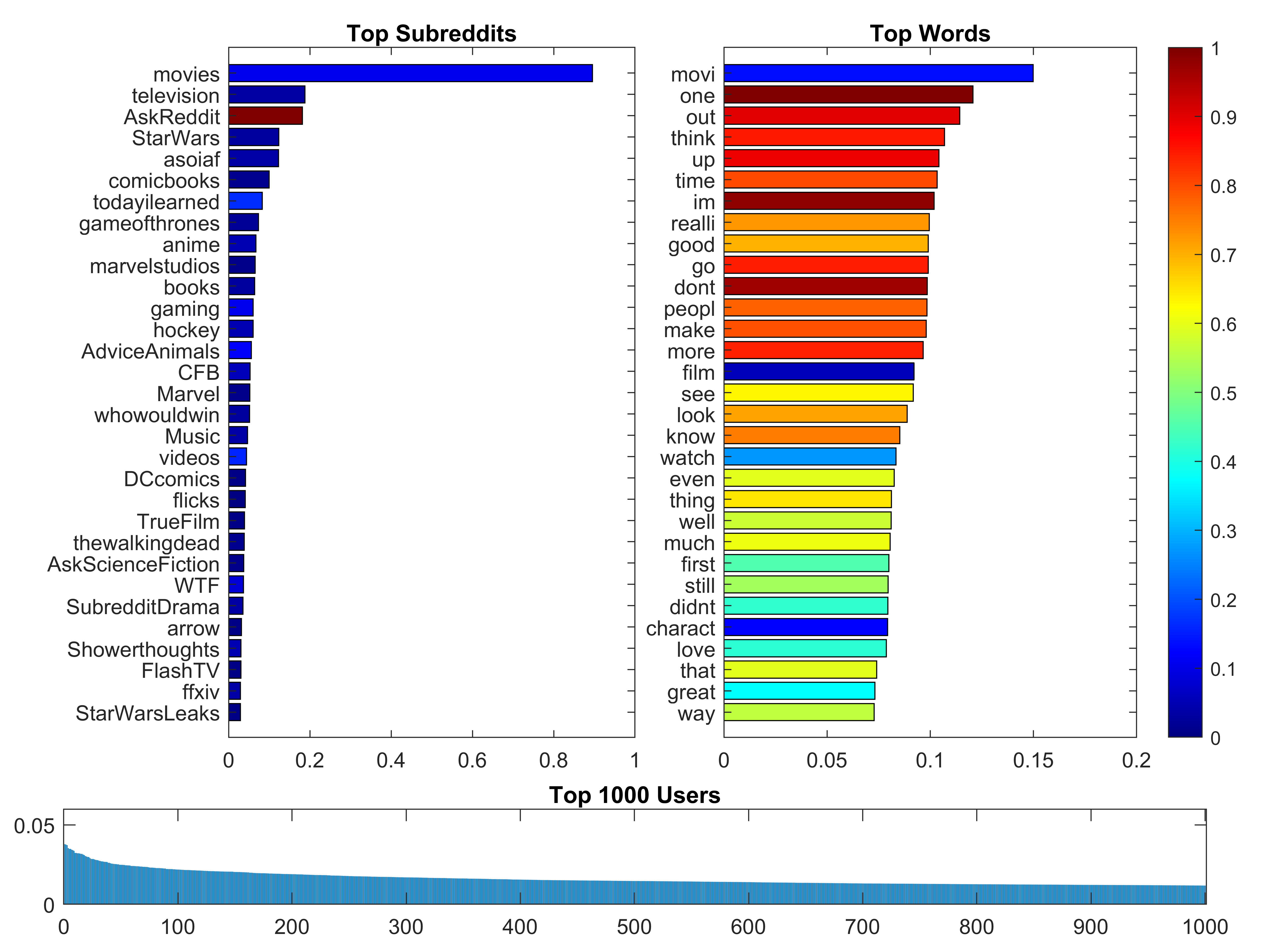}
  \caption{Reddit Factor 19/25: Film and Television}
  \label{fig:reddit-factor19}
\end{figure} %

\section{Conclusions}

We propose \cprandlev, a randomized algorithm which applies leverage score-based sketching to the overdetermined least squares problem in \cpals.
This approach offers an alternative to CP-ARLS-MIX \cite{BaBaKo18} which cannot be used on sparse tensors because the required FFTs destroy the tensor's sparsity.
A numerical comparison on dense tensors is provided in \appdense.
The strong performance of CP-ARLS-MIX may hint that its theoretical sample complexity can be improved.

Our proposed method has a better sample complexity than SPALS \cite{ChPePeLi16}, an alternate randomized algorithm.
Additionally, our experiments on the Amazon tensor with 1.7~billion nonzeros showed that \cprandlev was faster and achieved a better solution.

The improvements in speed come down in part to several practical improvements.
A few high-probability rows can result in excessive repeats in the sampled matrix.
We developed two methodologies that are generally useful for leverage-score matrix sketching.
First, we combine repeated rows so that the linear system to be solved is smaller.
Second, we have a method for deterministically including high-probability rows so that we achieve more unique samples overall.
Another issue is in checking convergence, which requires computing least squares error,
which has cost almost equivalent to solving the least squares problem.
Instead of computing the fit exactly, we borrow the technique of estimating the fit with a limited number of samples from \cite{BaBaKo18}.
In our numerical results, we show that \cprandlev implemented with all these techniques yields an order of magnitude speed ups on large-scale sparse tensors.

The paper leaves open many exciting theoretical directions.  
What is the optimal way to pick the number of samples (per mode even) and the deterministic threshold?
In general, these were chosen in this paper through numerical experiments.
Is it possible to show that hybrid sampling improves the $\beta$ factor in the leverage score estimates or to give a bound on the improvement in the $\epsilon$-accuracy?
And is there a more robust stopping condition for the algorithm than estimated fit?
Especially on the large tensors, obtaining a low-variance estimate of the fit required an extremely large number of samples.

Finally, \cprandlev has another advantage over \cpals in that it can be used on large distributed datasets.
Say one wanted to decompose a tensor that had to be stored across multiple nodes.
Each iteration of \cpals requires solving a system involving the entire tensor, but using \cprandlev one could store all the factor matrices on one node and sample based off the associated leverage scores.
The node could then gather the sampled fibers from the distributed tensor and solve the much smaller sampled system on one node.
Implementing this distributed algorithm and parallelizing much of the current implementation is a direction of future work.

\appendix

\section{Proof of \cref{thm:sketching}}%
\label{sec:proof}%
% ---- Inserted File ----
For ease of reference to existing literature, we use standard least squares notation as follows.
Consider the overdetermined matrix least squares problem defined by the \emph{design matrix} $\A \in \R^{N \times r}$,
with $N > r$ and  $\rank(\A) = r$, and the matrix $\Mx{B} \in \R^{N \times n}$.
Define the optimal squared residual to be
\begin{equation}
  \resid^2 \triangleq \min_{\Mx{X} \in \R^{r \times n}} \|\A \Mx{X} - \Mx{B}\|_F^2.
\end{equation}
The SVD of the design matrix is $\A = \UA \Mx{\Sigma}{\Mx{A}} {\Mx{V}{\!\!\Mx{A}}}^{\!\!\Tr}$,
so $\UA$ is an orthonormal basis for the $d$-dimensional column space of $\A$.
Let $\UA^{\perp}$ be an orthonormal basis for the $(N-r)$-dimensional
subspace orthogonal to the column space of $\Mx{A}$.
We define $\Bperp$ to be the projection of the the columns of $\Mx{B}$ onto this orthogonal subspace:
$\Bperp \triangleq \UA^{\perp} \UA^{\perp \trans} \Mx{B}$.
This matrix is important because the residual of the least squares problem is its Frobenious norm;
$\Mx{X}$ can be chosen so that each column in $\A \Mx{X}$ exactly matches the part of the corresponding column in $\Mx{B}$
in the column space of $\A$ but cannot, by definition, match anything in the range spanned by $\UA^{\perp}$:
\begin{equation*}
  \resid^2 = \min_{\Mx{X} \in \R^{r \times n}} \|\A \Mx{X} - \Mx{B}\|_F^2 = \|\UA^{\perp} \UA^{\perp \trans} \Mx{B}\|_F^2 = \|\Bperp\|_F^2
\end{equation*}
Denoting the solution to the least squares problem by by $\Xopt$
yields $\Mx{B} = \A \Xopt + \Bperp$.

Now consider the sketching problem defined by a matrix $\Mx{S} \in \Real^{s \times N}$:
\begin{equation}\label{eq:lsq}
  \min_{\Mx{X} \in \R^{r \times n}} \|\Mx{S}\A \Mx{X} - \Mx{S}\Mx{B}\|_F^2.
\end{equation}
Following the technique in Drineas et al.~\cite{drineas2011faster}, we split the proof into two parts.
In \cref{sec:prop-sketch-matr}, we prove bounds on both the residual and the solution of the sketched system
for a \emph{specific} sketching matrix $\Mx{S}$ that satisfies certain \emph{structural conditions}.
The proofs follow deterministically and do not consider the random aspect of the sketching matrix generation.
In \cref{sec:proof-that-leverage},
we then consider that $\Mx{S}$ is drawn from a distribution over matrices $\mathcal{D}$,
i.e., $\Mx{S} \sim \mathcal{D}$, and prove that the required structural conditions hold with high probability
if the number of samples is large enough.
Finally, the proof is completed by connecting these parts so
that the bounds on the residual and solution hold with high probability.

\subsection{Properties of sketching matrix under structural conditions}\label{sec:prop-sketch-matr}%
The main results mirror Lemma 1 and 2 in
\cite{drineas2011faster}.
The structure is also similar to Theorem 23 in
Woodruff~\cite{Wo14}, except that work uses
CountSketch, a different type of sketching.

We begin by assuming that our design matrix satisfies two \emph{structural conditions}:
\begin{align}
  \tag{SC1} \label{eq:sc1}
  \sigma^2_{\text{min}}(\Mx{S} \UA) &\geq 1/\sqrt{2},  \qtext{and}\\
  \tag{SC2} \label{eq:sc2}
  \|\UA^{\trans} \Mx{S}^{\trans} \Mx{S} \Bperp \|_F^2 &\leq \epsilon \resid^2/2. 
\end{align}

We first consider bounds with no constraints on the matrix $\Mx{B}$.
The first result is analogous to \cite[Lemma~1]{drineas2011faster}
except that we prove it for the \emph{matrix} least squares case.
We omit the proof because it follows the same logic as the vector case.
\begin{theorem}\label{thm:sketch-bound}
  For the overdetermined least squares problem \cref{eq:lsq}, assume
  the sketch matrix $\Mx{S}$ satisfies \cref{eq:sc1,eq:sc2} for some
  $\epsilon \in (0, 1)$.  Then the solution to the sketched problem,
  denoted $\Xtildeopt$, satisfies the following two bounds:
  \begin{align*}
    \|\A \Xtildeopt - \Mx{B} \|_F^2 &\leq (1 + \epsilon) \|\A \Xopt - \Mx{B} \|_F^2, \qtext{and} \\
    \|\Xopt - \Xtildeopt \|_F^2 &\leq \frac{\epsilon \|\A \Xopt - \Mx{B} \|_F^2}{\sigma^2_{\text{\rm min}}(\A)} .
  \end{align*}
\end{theorem}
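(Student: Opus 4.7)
The plan is to work in coordinates aligned with the SVD of $\A$, so that both structural conditions plug in directly. Write $\A = \UA \Mx{\Sigma}_{\Mx{A}} \Mx{V}_{\Mx{A}}^{\trans}$, and for any $\V{x} \in \R^d$ set $\V{y} = \Mx{\Sigma}_{\Mx{A}} \Mx{V}_{\Mx{A}}^{\trans} \V{x}$, so that $\A \V{x} = \UA \V{y}$; in particular $\A \xopt = \UA \yopt$ and $\A \xtildeopt = \UA \ytildeopt$. Since $\V{b} = \A \xopt + \bperp = \UA \yopt + \bperp$ with $\bperp$ orthogonal to the column space of $\A$, minimizing the sketched residual over $\V{x}$ is equivalent to minimizing $\|\Mx{S}\UA \V{y} - \Mx{S}\V{b}\|_2^2$ over $\V{y}$.

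First I would write down the normal equations for the rotated sketched problem,
$$(\Mx{S}\UA)^{\trans}(\Mx{S}\UA)\, \ytildeopt = (\Mx{S}\UA)^{\trans}\Mx{S}\V{b},$$
substitute $\V{b} = \UA \yopt + \bperp$, and cancel the $\UA \yopt$ term to obtain
$$(\Mx{S}\UA)^{\trans}(\Mx{S}\UA)\, (\ytildeopt - \yopt) = (\Mx{S}\UA)^{\trans}\Mx{S}\bperp.$$
Denote $\V{z} = \ytildeopt - \yopt$. Structural condition \cref{eq:sc1} implies that the Gram matrix $(\Mx{S}\UA)^{\trans}(\Mx{S}\UA)$ has smallest eigenvalue at least $1/\sqrt{2}$, so $\|\V{z}\|_2 \leq \sqrt{2}\,\|\UA^{\trans}\Mx{S}^{\trans}\Mx{S}\bperp\|_2$. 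Squaring and invoking \cref{eq:sc2} yields $\|\V{z}\|_2^2 \leq 2 \cdot (\epsilon \resid^2/2) = \epsilon \resid^2$, which is the single quantitative estimate driving both conclusions.

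For the residual bound, I would decompose
$$\A \xtildeopt - \V{b} = \UA(\ytildeopt - \yopt) - \bperp = \UA \V{z} - \bperp,$$
and use that $\UA \V{z}$ lies in the column space of $\A$ while $\bperp$ is orthogonal to it, together with orthonormality of $\UA$, to split the norm as $\|\A \xtildeopt - \V{b}\|_2^2 = \|\V{z}\|_2^2 + \|\bperp\|_2^2 \leq \epsilon \resid^2 + \resid^2 = (1 + \epsilon)\resid^2$. For the solution bound, I would undo the rotation: since $\rank(\A) = d$, $\Mx{\Sigma}_{\Mx{A}}$ is invertible, so $\xtildeopt - \xopt = \Mx{V}_{\Mx{A}} \Mx{\Sigma}_{\Mx{A}}^{-1} \V{z}$, and taking norms gives $\|\xtildeopt - \xopt\|_2^2 \leq \|\V{z}\|_2^2/\sigma_{\min}^2(\A) \leq \epsilon \resid^2/\sigma_{\min}^2(\A)$.

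The work here is almost entirely bookkeeping rather than inspired estimation: the real substance is the SVD-adapted choice of coordinates that makes \cref{eq:sc1} control the inverse of the Gram matrix and \cref{eq:sc2} control the perpendicular right-hand side. The main subtlety to verify is the orthogonality step that lets $\|\UA \V{z} - \bperp\|_2^2$ decouple as $\|\V{z}\|_2^2 + \|\bperp\|_2^2$; this relies precisely on $\bperp$ being defined as the projection of $\V{b}$ onto the orthogonal complement of $\range(\A)$, so no further assumptions beyond \cref{eq:sc1} and \cref{eq:sc2} are needed.
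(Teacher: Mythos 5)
Your proof is correct and follows essentially the same route as the paper's: reparameterize via the SVD so that \cref{eq:sc1} lower-bounds the Gram matrix $(\Mx{S}\UA)^{\trans}\Mx{S}\UA$ in the normal equations, \cref{eq:sc2} bounds the right-hand side, and orthogonality of $\bperp$ to $\range(\A)$ splits the residual. As a minor aside, your final display correctly gives $\epsilon\resid^2/\sigma^2_{\min}(\A)$, whereas the paper's last line contains a typographical $\epsilon^2$ that is inconsistent with both its own intermediate bound and the theorem statement.
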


We can obtain a tighter bound on the solution matrix if we assume a constant fraction of
 the columns of $\Mx{B}$ is in the column space of $\A$.  This is typically a
reasonable assumption for real-world least squares problems as the fit
is only practically interesting if this is true.
We again omit the proof because it follows the same logic as the vector case.

\begin{theorem}[\cite{drineas2011faster}]
  For the overdetermined least squares problem \cref{eq:lsq}, assume
  the sketch matrix $\Mx{S}$ satisfies \cref{eq:sc1,eq:sc2} for some $\epsilon \in (0, 1)$.
  Furthermore, assume that $\|\UA \UA^{\trans} \Mx{B} \|_F \geq \gamma \|\Mx{B}\|_F$ for some fixed $\gamma \in (0, 1]$.
  Then the solution to the sketched problem, denoted $\Xtildeopt$, satisfies the following bound:
  \begin{equation*}
    \|\Xopt - \Xtildeopt \|_F^2 \leq \epsilon^2 \kappa(\A)^2 (\gamma^{-2} - 1) \|\Xopt\|_F^2,
  \end{equation*}
  where $\kappa(\A)$ denotes the condition number of the matrix $\A$.
\end{theorem}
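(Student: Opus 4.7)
The plan is a one-step amplification of the solution-error bound already established in the previous theorem: the only new ingredient is the geometric hypothesis $\|\UA \UA^{\trans} \V{b}\|_2 \geq \gamma \|\V{b}\|_2$, which I would use to eliminate the appearance of $\resid^2$ on the right-hand side of that bound in favor of $\|\xopt\|_2^2$ together with the extremal singular values of $\A$.

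First, I would invoke the solution bound from the previous theorem, namely $\|\xopt - \xtildeopt\|_2^2 \leq \epsilon^2 \resid^2 / \sigma^2_{\min}(\A)$, which holds purely from the structural conditions \cref{eq:sc1,eq:sc2} and requires no probabilistic argument. The whole task then reduces to bounding $\resid^2$ from above in terms of $\|\xopt\|_2^2$. To that end, note that $\A\xopt = \UA \UA^{\trans} \V{b}$ is by construction the orthogonal projection of $\V{b}$ onto the column space of $\A$, so Pythagoras yields the identity $\|\V{b}\|_2^2 = \|\A\xopt\|_2^2 + \resid^2$.

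Second, I would apply the hypothesis in two complementary directions. Using it as $\|\A\xopt\|_2^2 \geq \gamma^2 \|\V{b}\|_2^2$ and subtracting from the identity above gives $\resid^2 \leq (1-\gamma^2)\|\V{b}\|_2^2$. Rearranging the same hypothesis as $\|\V{b}\|_2^2 \leq \gamma^{-2}\|\A\xopt\|_2^2$ and substituting yields $\resid^2 \leq (\gamma^{-2}-1)\|\A\xopt\|_2^2$. A single final application of the standard inequality $\|\A\xopt\|_2 \leq \sigma_{\max}(\A)\|\xopt\|_2$ then gives $\resid^2 \leq (\gamma^{-2}-1)\sigma^2_{\max}(\A)\|\xopt\|_2^2$. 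Plugging this into the previous theorem's bound and using the definition $\kappa(\A) = \sigma_{\max}(\A)/\sigma_{\min}(\A)$ delivers the claim.

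There is essentially no genuine obstacle here, since the heavy lifting (the deterministic sketching guarantees coming from SC1 and SC2) is already packaged inside the previous theorem. The only point worth watching is that the $\gamma$ hypothesis has to be used twice and in opposite directions, once to shrink $\resid^2$ relative to $\|\V{b}\|_2^2$ and once to re-express $\|\V{b}\|_2^2$ in terms of $\|\A\xopt\|_2^2$; combining these two uses is what produces the asymmetric prefactor $\gamma^{-2} - 1$ rather than the more naive $1-\gamma^2$.
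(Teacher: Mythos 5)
Your proposal is correct and follows essentially the same route as the paper: bound $\resid^2$ via the Pythagorean identity $\|\V{b}\|_2^2 = \|\A\xopt\|_2^2 + \resid^2$ and the hypothesis $\|\UA\UA^{\trans}\V{b}\|_2 \geq \gamma\|\V{b}\|_2$ to get $\resid^2 \leq (\gamma^{-2}-1)\|\A\xopt\|_2^2 \leq \sigma_{\max}^2(\A)(\gamma^{-2}-1)\|\xopt\|_2^2$, then plug this into the solution-error bound from the previous theorem. Your two-pass use of the $\gamma$ hypothesis is just a slightly more verbose rendering of the paper's single substitution $\|\V{b}\|_2^2 \leq \gamma^{-2}\|\UA\UA^{\trans}\V{b}\|_2^2$; the arguments are otherwise identical.
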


\subsection{Proof that sketching matrix meets structural conditions}\label{sec:proof-that-leverage}%
In this section, we show that the methodology for choosing the columns
via the leverage-score-based sampling scheme yields the desired bounds.
The first structural condition \cref{eq:sc1} can be shown as a corollary to the following result in Woodruff \cite{Wo14}:

\begin{lemma}[\cite{Wo14}]\label{lem:lev-scores}
  Consider $\Mx{A} \in \R^{N \times r}$, its SVD $\UA \Mx{\Sigma}_{\Mx{A}} \Mx{V}^{\trans}_{\Mx{A}}$,
  and row leverage scores $\ell_i(\A)$.
  Let $\overline{\Vl}(\A)$ be an overestimate of the leverage score such that for some positive $\beta \leq 1$,
  we have $p_i\big(\overline{\Vl}(\A) \big) \geq \beta \cdot p_i\big(\Vl(\A)\big)$ for all $i \in [N]$.
  Construct row sampling and rescaling matrix $\Mx{S} \in \R^{s \times N}$
  by importance sampling according to the leverage score overestimates, $\overline{\Vl}(\A)$.
  If $s > 144 r \ln(2r/\delta)/(\beta \epsilon^2)$, then the following holds
  with probability at least $1 - \delta$ simultaneously for all $i$:
  $1 - \epsilon \leq \sigma_i^2(\Mx{S} \UA) \leq 1 + \epsilon$.
\end{lemma}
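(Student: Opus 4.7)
The plan is to view $\UA^{\trans} \Mx{S}^{\trans} \Mx{S} \UA$ as a sum of $s$ i.i.d.\ random rank-1 PSD matrices and apply a matrix Chernoff inequality (in the Tropp form) to show concentration around $I_d$, at which point the singular value bound follows by passing to $\sigma_i^2(\Mx{S}\UA) = \lambda_i(\UA^{\trans}\Mx{S}^{\trans}\Mx{S}\UA)$. Specifically, since $\Mx{S}$ samples and rescales one row per output row, I write
\begin{equation*}
  \UA^{\trans} \Mx{S}^{\trans} \Mx{S} \UA = \sum_{j=1}^{s} X_j,
  \qquad
  X_j = \frac{1}{s\, \bar p_{\xi_j}} \UA(\xi_j,:)^{\trans} \UA(\xi_j,:),
\end{equation*}
where $\xi_j$ is drawn i.i.d.\ from the sampling distribution $\bar p_i = \overline{\ell}_i(\A)/\sum_k \overline{\ell}_k(\A)$ induced by the leverage score overestimates.

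First I would compute the mean: $\Exp[X_j] = \sum_i \bar p_i \cdot \UA(i,:)^{\trans}\UA(i,:)/(s\,\bar p_i) = \UA^{\trans}\UA/s = I_d/s$, so $\Exp\bigl[\sum_j X_j\bigr] = I_d$. Next I would bound each summand in operator norm: since $\UA(i,:)^{\trans}\UA(i,:)$ is rank-1 with norm $\ell_i(\A)$, and since the overestimate hypothesis gives $\bar p_i \geq \beta\, \ell_i(\A)/d$, I get
\begin{equation*}
  \|X_j\|_2 \;\leq\; \frac{\ell_{\xi_j}(\A)}{s\,\bar p_{\xi_j}} \;\leq\; \frac{d}{s\beta}
\end{equation*}
deterministically. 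This uniform bound is exactly the input needed for the scalar-variance version of matrix Chernoff.

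I would then invoke Tropp's matrix Chernoff inequality, which states that for $\sum_j X_j$ with $0 \preceq X_j \preceq R \cdot I$, mean $\mu_{\min} I \preceq \Exp[\sum X_j] \preceq \mu_{\max} I$, and $\epsilon \in (0,1)$, the deviation of the extreme eigenvalues from $\mu_{\min}, \mu_{\max}$ by a factor of $\epsilon$ fails with probability at most $d \cdot \exp(-\epsilon^2 \mu_{\min}/(c R))$ for an absolute constant $c$. Plugging in $\mu_{\min}=\mu_{\max}=1$ and $R = d/(s\beta)$, the failure probability for both the upper and lower tail is bounded by $2d \exp(-\epsilon^2 s \beta/(c d))$. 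Setting this $\leq \delta$ and solving for $s$ gives the stated bound $s > 144\, d\,\ln(2d/\delta)/(\beta \epsilon^2)$, where the constant $144$ absorbs $c$ and the standard constants in the Chernoff derivation. When this event holds, $\|\UA^{\trans}\Mx{S}^{\trans}\Mx{S}\UA - I_d\|_2 \leq \epsilon$, which translates directly to $1-\epsilon \leq \sigma_i^2(\Mx{S}\UA) \leq 1+\epsilon$ for every $i$, as desired.

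The main obstacle is bookkeeping the constants through the matrix Chernoff inequality to land on the clean bound $144$ in the statement; the conceptual steps (identifying the mean, bounding $\|X_j\|$ via the leverage score overestimate, and converting a spectral perturbation into a singular value bound) are straightforward. A secondary concern is making sure the overestimate hypothesis is used in its strongest form, namely that $\bar p_i \geq \beta\, \ell_i(\A)/d$ holds \emph{for every} $i$ (not just on average), since the deterministic upper bound on $\|X_j\|_2$ is what drives the $1/\beta$ factor in the sample complexity.
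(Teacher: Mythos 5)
Your proposal is correct, and it is worth noting that the paper itself does not prove this lemma at all --- it explicitly defers to Woodruff's survey (``we just state the result here''), so there is no in-paper argument to compare against. Your route (writing $\UA^{\trans}\Mx{S}^{\trans}\Mx{S}\UA$ as a sum of i.i.d.\ rank-one PSD terms with mean $I_d/s$, bounding each term's operator norm by $d/(s\beta)$ via the overestimate hypothesis $\bar p_i \geq \beta\,\ell_i(\A)/d$, and invoking matrix Chernoff) is exactly the standard proof of this subspace-embedding result in the literature that the paper cites. The only loose end is the constant: Tropp's Chernoff bounds give a constant like $3$ rather than $144$ in the sample-complexity prefactor (Woodruff's $144$ comes from a different, looser derivation), but since the lemma only needs \emph{some} absolute constant, this is cosmetic. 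One small point to make explicit if you write this up: the deterministic bound $\|X_j\|_2 \leq d/(s\beta)$ requires $\bar p_{\xi_j} > 0$ whenever $\ell_{\xi_j}(\A) > 0$, which is guaranteed by the pointwise hypothesis with $\beta > 0$ (rows with zero leverage contribute $X_j = 0$ and are harmless).
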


\noindent Fixing $\epsilon = 1 - 1/\sqrt{2}$ in \cref{lem:lev-scores} yields the  Corollary  we require.

\begin{lemma}\label{lem:sc1}
  Consider $\Mx{A} \in \R^{N \times r}$, its SVD $\UA \Mx{\Sigma}_{\Mx{A}} \Mx{V}^{\trans}_{\Mx{A}}$,
  and row leverage scores $\ell_i(\A)$.
  Let $\overline{\Vl}(\A)$ be an overestimate of the leverage score such that for some positive $\beta \leq 1$,
  we have $p_i\big(\overline{\Vl}(\A) \big) \geq \beta \cdot p_i\big(\Vl(\A)\big)$ for all $i \in [N]$.
  Construct row sampling and rescaling matrix $\Mx{S} \in \R^{s \times N}$
  by importance sampling according to the leverage score overestimates, $\overline{\Vl}(\A)$.
    If $s > C r \ln(2r/\delta)/\beta $ with $C = 144/(1-1/\sqrt{2})^2$,
  then $\sigma_{\min}^2(\Mx{S} \UA) \geq 1/2$
  with probability at least $1 - \delta$.
\end{lemma}

The second structural condition \cref{eq:sc2} can be proven using results for randomized matrix-matrix multiplication.
Consider the matrix product $\UA^\trans \Bperp$.
This projects the part of  the columns of $\Mx{B}$ outside of the column space of $\A$ onto the column space
of $\A$ and thus by definition is equal to the all zeros  matrix $\Mx{0}_{r \times n}$ (we have assumed $\rank(\A) = r$).
This condition requires us to bound how well the sampled product
$\UA^\trans \Mx{S}^{\trans} \Mx{S} \Bperp$ approximates the original product.
We can do this via the following lemma from Drineas, Kannan, and Mahoney \cite{drineas2006fast}.

\begin{lemma}[\cite{drineas2006fast}]\label{lemma:randMatMult}
  Consider two matrices of the form $\Mx{A} \in \R^{n \times m}$ and $\Mx{B} \in \R^{n \times p}$
  and let $s$ denote the number of samples.
  We form an approximation of the product $\Mx{A}' \Mx{B}$ as follows.
  Choose $s$ rows, denoted $\{\xi^{(1)}, \ldots, \xi^{(s)}\}$, according to the probability
  distribution defined by $\V{p} \in [0,1]^n$ with the property that
  there exists $\beta >0$ such that
  $p_k \geq \beta \|\A(k,:)\|^2 / \|\A\|_F^2 \qtext{for all} k \in [n]$.
  Then form the approximate product
  \begin{equation*}
    \frac{1}{s} \sum_{t = 1}^{s} \frac{1}{p_{\xi^{(t)}}} \Mx{A}(\xi^{(t)},:)^{\trans} \Mx{B}(\xi^{(t)}, :) \triangleq (\Mx{S} \A)^{\trans} \Mx{S} \Mx{B},
  \end{equation*}
  where we define $\Mx{S}$ to be the random row sampling and rescaling operator.
  We then have the following guarantee on the quality of the approximate product:
  \begin{equation*}
    \E \left [  \| \Mx{A}^{\trans} \Mx{B} - (\Mx{S} \A)^{\trans} \Mx{S} \Mx{B} \|_F^2 \right ] \leq \frac{1}{\beta s} \|\Mx{A}\|_F^2 \|\Mx{B}\|_F^2.
  \end{equation*}
\end{lemma}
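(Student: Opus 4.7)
The plan is to exploit that the approximation is an average of i.i.d.\ random rank-one matrices, reduce the squared Frobenius error to a sum of per-entry variances, bound each variance by a second moment, and apply the sampling hypothesis on $p_k$ exactly once at the end.

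First, I would verify entry-wise unbiasedness. Fix an index pair $(i,j)$ and set $X_t = \Mx{A}(\xi^{(t)},i)\,\Mx{B}(\xi^{(t)},j)/p_{\xi^{(t)}}$ so that the $(i,j)$ entry of the approximation is $C_{ij} \equiv \tfrac{1}{s}\sum_{t=1}^s X_t$. A direct computation using $\Prob(\xi^{(t)}=k)=p_k$ gives $\E[X_t] = \sum_k \Mx{A}(k,i)\,\Mx{B}(k,j) = (\Mx{A}^\trans \Mx{B})_{ij}$, hence $\E[C_{ij}] = (\Mx{A}^\trans \Mx{B})_{ij}$. Because the $\xi^{(t)}$ are i.i.d., the total squared error decomposes as $\E\!\left[\|\Mx{A}^\trans \Mx{B} - (\Mx{S}\Mx{A})^\trans \Mx{S}\Mx{B}\|_F^2\right] = \sum_{i,j} \Var(C_{ij}) = \tfrac{1}{s}\sum_{i,j}\Var(X_1)$, which I would upper bound by $\tfrac{1}{s}\sum_{i,j}\E[X_1^2]$ by dropping the subtracted squared mean.

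Next, I would compute $\E[X_1^2] = \sum_k \Mx{A}(k,i)^2\,\Mx{B}(k,j)^2/p_k$ and interchange the order of summation, which factors in terms of row norms: $\sum_{i,j}\E[X_1^2] = \sum_k \|\Mx{A}(k,:)\|^2\,\|\Mx{B}(k,:)\|^2/p_k$. Finally, I would invoke the hypothesis $p_k \geq \beta\|\Mx{A}(k,:)\|^2/\|\Mx{A}\|_F^2$ to cancel the $\|\Mx{A}(k,:)\|^2$ factor in every term and extract the constant $\|\Mx{A}\|_F^2/\beta$, leaving $\sum_k \|\Mx{B}(k,:)\|^2 = \|\Mx{B}\|_F^2$. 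Combining these pieces and noting $\|\Mx{A}\|_F = \|\Mx{A}^\trans\|_F$ yields the stated bound.

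There is no substantive obstacle: this is essentially a textbook second-moment calculation for i.i.d.\ sampling under a well-chosen distribution, and the sampling hypothesis is invoked exactly once at the very end. The only places requiring care are confirming entry-wise unbiasedness so that mean squared error equals the sum of variances, using independence to obtain the $1/s$ variance reduction, and interchanging the summations over $(i,j)$ and $k$ to factor row norms cleanly. The relaxation $\Var(X_1)\leq \E[X_1^2]$ is slack but harmless, as the resulting bound is only used as input to subsequent concentration arguments.
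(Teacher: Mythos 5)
Your proposal is correct and follows essentially the same route as the paper's proof: fix an entry $(i,j)$, verify unbiasedness of the per-entry estimator, reduce the expected squared Frobenius error to a sum of variances, bound the variance by the second moment, interchange the sums over $(i,j)$ and $k$ to factor row norms, and invoke the hypothesis on $p_k$ once at the end. The only cosmetic difference is where you place the $1/s$ normalization (outside the summand rather than inside), which does not affect the argument.
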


 \noindent We can apply \cref{lemma:randMatMult} to bound the probability of \cref{eq:sc2} holding.

\begin{lemma}\label{lem:sc2}
  Consider full rank $\Mx{A} \in \R^{N \times r}$, its SVD $\UA \Mx{\Sigma}_{\Mx{A}} \Mx{V}^{\trans}_{\Mx{A}}$, and row leverage scores $\ell_i(\A)$.
  Define the probability distribution $\V{p} \in [0,1]^n$ and assume there exists $\beta \in (0,1]$
  such that $p_i \geq \beta \ell_i(\A)/d$ for all $i \in [N]$.
  Construct row sampling and rescaling matrix $\Mx{S} \in \R^{s \times N}$ by importance sampling by the leverage score overestimates.
  Then provided $s \geq  \frac{ 2 r}{\beta \delta \epsilon }$,
  the property $\|\UA^{\trans} \Mx{S}^{\trans} \Mx{S} \Bperp \|_F^2 \leq \epsilon \resid^2/2$
  holds with probability $\delta$.
\end{lemma}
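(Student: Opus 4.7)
The plan is to reduce this directly to the randomized matrix multiplication bound of \cref{lemma:randMatMult}. The key observation is that $\UA^{\trans} \bperp = \V{0}$, because $\bperp$ lies in the orthogonal complement of the column space of $\A$ and $\UA$ is a basis for that column space. Thus $\UA^{\trans} \Mx{S}^{\trans} \Mx{S} \bperp$ is exactly the deviation of the sampled matrix product from the true product, and the randomized matrix multiplication result gives a direct handle on its expected squared Frobenius norm.

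First I would invoke \cref{lemma:randMatMult} with the role of $\Mx{A}$ played by $\UA \in \R^{n \times d}$ and the role of $\Mx{B}$ played by $\bperp \in \R^{n}$. The hypothesis of that lemma requires $p_k \geq \beta \|\UA(k,:)\|_2^2 / \|\UA\|_F^2$ for all $k$. Since $\|\UA(k,:)\|_2^2 = \ell_k(\A)$ and $\|\UA\|_F^2 = d$, this is exactly the assumption $p_k \geq \beta \ell_k(\A)/d$ we are given. The lemma then yields
\begin{equation*}
  \E \bigl[\,\|\UA^{\trans}\bperp - \UA^{\trans}\Mx{S}^{\trans}\Mx{S}\bperp\|_F^2\,\bigr] \leq \frac{1}{\beta s}\,\|\UA\|_F^2\,\|\bperp\|_2^2.
\end{equation*}
Since $\UA^{\trans}\bperp = \V{0}$, $\|\UA\|_F^2 = d$, and $\|\bperp\|_2^2 = \resid^2$, this simplifies to
\begin{equation*}
  \E \bigl[\,\|\UA^{\trans}\Mx{S}^{\trans}\Mx{S}\bperp\|_2^2\,\bigr] \leq \frac{d\,\resid^2}{\beta s}.
\end{equation*}

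Next I would pass from expectation to a high-probability statement via Markov's inequality applied to the nonnegative random variable $\|\UA^{\trans}\Mx{S}^{\trans}\Mx{S}\bperp\|_2^2$. Specifically,
\begin{equation*}
  \Prob\!\left[\,\|\UA^{\trans}\Mx{S}^{\trans}\Mx{S}\bperp\|_2^2 > \tfrac{\epsilon}{2}\resid^2\,\right] \leq \frac{2\,d}{\beta s\,\epsilon}.
\end{equation*}
Choosing $s \geq 2d/(\beta\,\delta\,\epsilon)$ forces the right-hand side to be at most $\delta$, so the event $\|\UA^{\trans}\Mx{S}^{\trans}\Mx{S}\bperp\|_2^2 \leq \epsilon\,\resid^2/2$ holds with probability at least $1-\delta$, yielding the claimed bound.

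The plan is essentially mechanical once \cref{lemma:randMatMult} is in hand, so there is no real obstacle. The only subtlety worth flagging is the book-keeping between the hypothesis $p_i \geq \beta \ell_i(\A)/d$ used here and the form $p_i \geq \beta \|\UA(i,:)\|_2^2/\|\UA\|_F^2$ required by the matrix-multiplication lemma; these coincide precisely because of the identities $\ell_i(\A) = \|\UA(i,:)\|_2^2$ and $\sum_i \ell_i(\A) = d$. A secondary minor point is that Markov's inequality is quite loose, which is why the sample complexity here scales as $1/\delta$ rather than $\log(1/\delta)$; a sharper Bernstein- or Chernoff-style argument could improve this, but is not needed for the stated result.
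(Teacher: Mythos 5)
Your proposal is correct and follows essentially the same route as the paper's proof: apply the randomized matrix-multiplication lemma to $\UA$ and $\bperp$ using $\UA^{\trans}\bperp = \V{0}$, then convert the expectation bound to a probability bound via Markov's inequality and solve for $s$. Your explicit check that $p_i \geq \beta\ell_i(\A)/d$ matches the lemma's hypothesis via $\ell_i(\A) = \|\UA(i,:)\|_2^2$ and $\|\UA\|_F^2 = d$ is a welcome bit of bookkeeping the paper leaves implicit.
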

\begin{proof}
  Apply \cref{lemma:randMatMult} to obtain a bound on the expected value:
  \begin{align*}
    \E \left [  \| \UA^\trans \Mx{S}^{\trans} \Mx{S} \Bperp \|_F^2 \right ]
    &= \E \left [  \| \Mx{0}_{r \times n} - \UA^\trans \Mx{S}^{\trans} \Mx{S} \Bperp \|_F^2 \right ], \\
    &= \E \left [  \| \Mx{\UA} \Mx{\Bperp} - \UA^\trans \Mx{S}^{\trans} \Mx{S} \Bperp \|_F^2 \right ], \\
    & \leq \frac{1}{\beta s} \|\UA\|_F^2 \|\Bperp\|_F^2 = \frac{r}{\beta s} \|\Bperp\|_F^2
    \mbox{}=\frac{r}{\beta s} \mathcal{R}^2.
  \end{align*}
  Markov's inequality states that for non-negative random variable $X$ and scalar $t > 0$, we can bound the probability that $X \geq t$ as $\Prob [ X \geq t] \leq \E[X]/t$.
  We can apply this inequality to bound the probability that the sketching matrix violates \cref{eq:sc2}:
  \begin{align*}
    \Prob_{\Mx{S} \sim \mathcal{D}} \left [  \| \UA^\trans \Mx{S}^{\trans} \Mx{S} \Bperp \|_F^2  \geq \frac{\epsilon \|\Bperp\|_F^2}{2} \right ] &\leq \frac{ 2 \E \left [  \| \UA^\trans \Mx{S}^{\trans} \Mx{S} \Mx{\Bperp} \|_F^2 \right ]}{\epsilon \|\Bperp\|_F^2} \leq \frac{ 2 r}{\beta \epsilon s}
  \end{align*}
  where in the last step we have used our bound the expected value. Thus if we set the right-hand side equal to $\delta$, we obtain that the probability that \cref{eq:sc2} holds is greater than or equal to $1 - \delta$ as desired.  Solving for $s$ yields that we thus must have $s \geq  \frac{ 2 r}{\beta \delta \epsilon }$.
\end{proof}

\subsection{Main Theorem} We combine the above results to prove \Cref{thm:sketching} in the main text, here written in the standard least squares notation.

\begin{theorem}%
  \label{thm:sketching-app}
  Consider the least squares problem
  $\min_{\Mx{X} \in \Real^{r \times n}} \| \A \Mx{X} - \B\|^2$
  where $\A \in \Real^{N \times r}$ with $r \ll N$ and $\rank(\A) = r$
  and  $\B \in \Real^{N \times n}$.
  Let $\Vp \in [0,1]^N$ be a probability distribution and
  assume there exists a fixed
  $\beta \in (0,1]$ such that
  \begin{displaymath}
    \beta \leq \min_{i \in [N]} \frac{p_i r}{\ell_i(\A)} \qtext{for all} i \in [N].
  \end{displaymath}  
  For any  $\epsilon, \delta \in (0,1)$,
  set 
  \begin{displaymath}
    s=({r}/{\beta}) \max \set{ C \log (r/\delta), {1}/(\delta \epsilon) }
    \qtext{where}
    C = 144/(1-1/\sqrt{2})^2    ,
  \end{displaymath}
  and let $\Mx{S} = \RandSample(s,\Vp)$.
  Define 
  $\Xopt \equiv \arg \min_{\Mx{X} \in \Real^{r \times n}} \| \A \Mx{X} - \B\|^2$.
  Then $\Xtildeopt \equiv \arg \min_{\Mx{X} \in \Real^{r \times n}} \| \Mx{S} \A \Mx{X} - \Mx{S} \B\|_F^2$
  satisfies $\| \A \Xtildeopt - \B \|_F^2 \leq (1+\epsilon) \| \A \Xopt - \B \|_F^2$
  with probability at least $1-\delta$.
\end{theorem}

\begin{proof}
  Applying \cref{lem:sc1}, we have that \cref{eq:sc1} holds with probability $1-\delta/2$ if
  $s=C r \log (r/\delta)/\beta$.
  Applying \cref{lem:sc2}, we have that \cref{eq:sc2} holds with probability $1-\delta/2$ if
  $s=r/(\beta \delta \epsilon)$.
  Hence, a union bound says that \cref{eq:sc1,eq:sc2} both hold with probability $1-\delta$ if
  $s= (r/\beta) \max\set{ C \log(r/\delta), 1/(\delta\epsilon)}$.
  Combining this with \cref{thm:sketch-bound} yields the result.
\end{proof}

%
%

%

%
%
%
%
% --- End Inserted File ---

\section{Comparison of Sampling Methods}%
\label{sec:sampling}%
% ---- Inserted File ----
%
\Cref{fig:uber-sample-comparison} compares \cprandlev (random) using three different sampling methods: uniform sampling, length squared sampling, and leverage score sampling.
As with leverage score sampling, we use an the analogous bound on the $\ell_2$-norm squared of the rows of the KRP.
Let $\Z = \Ak{d} \odot \cdots \odot \Ak{1} \in \Real^{N \times r}$.
Letting $(i_1,\dots,i_d)$ be the multi-index corresponding to $i$
as defined in \cref{eq:bijection}, the $\ell_2$-norm squared can be bounded as $\| \Z(i, :) \|_2^2 \leq \prod_{k=1}^d \|\Ak(i_k, :)\|_2^2$.
The experiments are run with the same setup as \Cref{sec:uber} on environment (A).  We find a $r=25$ CP tensor decomposition, compute the true fit every epoch of 5 iterations, and terminate after the fit fails to improve by more than $10^{-4}$ for 3 epochs.
From \Cref{fig:uber-sample-comparison},
we can see that uniform sampling is not competitive.
Length-squared sampling is arguably competitive,
but the fit is not as high as leverage-score sampling.

\pgfplotstabletranspose[input colnames to={IDs}, colnames from={IDs}]{\uberrawsamp}{data-uber-sample-comparison-fittrace-raw.csv}
\pgfplotstabletranspose[input colnames to={IDs}, colnames from={IDs}]{\uberitpsamp}{data-uber-sample-comparison-fittrace-interpl.csv}
\begin{figure}
  \centering
  \pgfplotsset{
    height=1.5in,
    width=0.49\textwidth,
    xlabel = time (s),
    ylabel = fit,
    ymin = 0.05, ymax = 0.200,
    xmin = 0,
    xmax = 250,
    yticklabel style={%
      /pgf/number format/.cd,
      fixed,
      fixed zerofill,
      precision=2,
    },
    unbounded coords=jump,
    legend pos = south east,
  }
  \subfloat[$s=2^{16}$]{\label{fig:supp-uber-sample-comp16}%
  \begin{tikzpicture}
      \begin{axis}[font=\footnotesize,    
          cycle list={
            {index of colormap=0 of Set1-8,mark=*},
            {index of colormap=1 of Set1-8,mark=*},
            {index of colormap=2 of Set1-8,mark=*}}
          ]
        \foreach \m in {1, 3, 5} {
          \pgfmathparse{(\m-1)/2}      
          \pgfplotsset{cycle list shift=\pgfmathresult}
          \addplot+[thick,mark=none, forget plot ] table [x=TIME, y=M\m_MED]{\uberitpsamp};
        }
        \foreach \i in {1,...,10} {
          \foreach \m in {1, 3, 5} {        
            \addplot+[mark size=.4pt, very thin] table [x=M\m_R\i_T, y=M\m_R\i_F]{\uberrawsamp};
          }
        }
        \legend{Uniform,Length Squared,Leverage Scores}
      \end{axis}
    \end{tikzpicture}
  }
  \subfloat[\phantom{foo} $s=2^{17}$ \phantom{foo}]{\label{fig:supp-uber-sample-comp17}
    \begin{tikzpicture}
      \begin{axis}[font=\footnotesize,    
        cycle list={
          {index of colormap=0 of Set1-8,mark=*},
          {index of colormap=1 of Set1-8,mark=*},
          {index of colormap=2 of Set1-8,mark=*}}
        ]
        \foreach \m in {2, 4, 6} {
          \pgfmathparse{(\m-1)/2}       
          \pgfplotsset{cycle list shift=\pgfmathresult}
          \addplot+[thick,mark=none, forget plot ] table [x=TIME, y=M\m_MED]{\uberitpsamp};
        }
        \foreach \i in {1,...,10} {
          \foreach \m in {2, 4, 6} {        
            \addplot+[mark size=.4pt, very thin] table [x=M\m_R\i_T, y=M\m_R\i_F]{\uberrawsamp};
          }
        }
        \legend{Uniform,Length Squared ,Leverage Scores}
      \end{axis}
    \end{tikzpicture}
  }
  
\caption{Comparison of \cprandlev (random, $\tau = 1$) using different sampling methods (uniform, length squared, and leverage scores) 
  to compute a rank $r=25$ CP decomposition of the
  Uber tensor with 3.3 million nonzeros.
  The dotted lines represent the individual runs with
  markers indicating epoch (5 iterations).}
  \label{fig:uber-sample-comparison}
\end{figure}
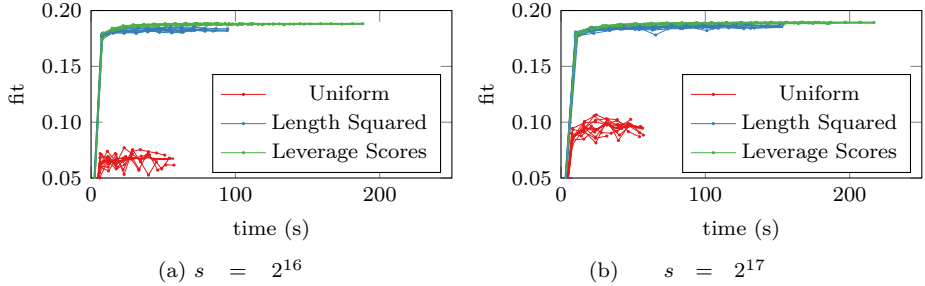 %

%
%
%
%
% --- End Inserted File ---

\section{Comparison on Dense Tensor}%
\label{sec:dense}%
% ---- Inserted File ----
%
%
%
We consider the performance of \cprandlev (random, i.e., $\tau=1$)
on a synthetic dense tensor as compared to
both the standard CP-ALS method and \cprand \cite{BaBaKo18}.
We consider \cprand with mixing (-MIX) and plain uniform sampling without mixing (-UNI).
Only the method with mixing is guaranteed to work.
We show only the results of \cprandlev using the random method ($\tau=1$) since the hybrid method ($\tau=1/s$) is similar.

\Cref{fig:syn_dense} plots total run time on the x-axis and the \emph{factor match score}\footnote{%
  Because this problem is synthetic, we know the true factor matrices and therefore can report the factor match score
  which says how similar the obtained solution is to the true solution; a factor match score of 1.0 is a perfect match.
}
corresponding to the best fit of the ten runs on the y-axis.
We describe the specifics of the data generation in \cref{sec:data-generation} and the specifics of the experimental setup in \cref{sec:experimental-setup}.
The tensor has been constructed so that the subproblems are coherent (difficult for \cprand-UNI),
which has the side effect of making several of the factors highly collinear (difficult for CP-ALS).
As a result, CP-ALS only achieves a factor match score around 0.814, and all the other methods achieve better
scores in less time (\cprandlev with $s=2^{9}$, \cprand-MIX with $s=2^8$, \cprand-UNI with $s=2^{10}$).
\cprand-UNI actually gets worse for $s=2^{11}$ samples.
Between \cprandlev and \cprand-MIX, the latter had superior performance, achieving higher scores in less time, and this is further emphasized by looking at the fraction of runs that achieved a score above 0.93.
\cprand-MIX was successful, i.e. achieved a score greater than or equal to 0.93,
80\% of time with $s=2^9$ samples, 70\% with $s=2^{10}$ samples, and 30\% with $s=2^{11}$ samples.
\cprandlev was only successful 10\% of the time with $s=2^{11}$ samples.
Thus, while \cprandlev is a viable choice for dense tensors, the experiments give the advantage to \cprand-MIX.

\begin{figure}
  \definecolor{mycolor1}{rgb}{0.0000,0.4470,0.7410}
  \definecolor{mycolor2}{rgb}{0.8500,0.3250,0.0980}
  \definecolor{mycolor3}{rgb}{0.9290,0.6940,0.1250}
  \definecolor{mycolor4}{rgb}{0.4940,0.1840,0.5560}
  \definecolor{mycolor5}{rgb}{0.4660,0.6740,0.1880} 
    \pgfplotsset{
      legend style={font=\footnotesize},
    scale only axis,
    height = 1.5in,
    width = 4.5in,
    xlabel = Total Time for 10 Runs (s),
    ylabel = Factor Match Score of Best Fit,
    legend pos=south east,
    legend columns=2,
    legend cell align={left},
    scatter,scatter src=explicit symbolic, 
    scatter/classes={
      a1={mark=otimes,black, mark size=4, line width = 0.75 pt},b1={mark=otimes,black, mark size=4, line width = 0.75 pt},c1={mark=otimes,black,mark size=4, line width = 0.75 pt},d1={mark=otimes,black,mark size=4, line width = 0.75 pt},e1={mark=otimes,black,mark size=4, line width = 0.75 pt},
      a3={mark=x,mark size=2, mycolor3},b3={mark=x,mark size=3,mycolor3},c3={mark=x,mark size=4,mycolor3},d3={mark=x,mark size=5,mycolor3},e3={mark=x,mark size=6,mycolor3},
      a4={mark=x,mark size=2, mycolor4},b4={mark=x,mark size=3,mycolor4},c4={mark=x,mark size=4,mycolor4},d4={mark=x,mark size=5,mycolor4},e4={mark=x,mark size=6,mycolor4},
      a5={mark=x,mark size=2, mycolor5},b5={mark=x,mark size=3,mycolor5},c5={mark=x,mark size=4,mycolor5},d5={mark=x,mark size=5,mycolor5},e5={mark=x,mark size=6,mycolor5},
      m={black}},  
    }
    \centering
    \begin{tikzpicture}
      \begin{axis}[font=\footnotesize,ymax=1,grid style={line width=.1pt, draw=gray},  grid=both,]
        \addlegendimage{only marks, mark=x, mark size=2pt, line width = 1 pt}
        \addlegendentry{$s=2^7$}
        \addlegendimage{only marks, mark=otimes, mark size=4pt, line width = 0.75 pt}
        \addlegendentry{CP-ALS (standard)}
        \addlegendimage{only marks, mark=x, mark size=3pt, line width = 1 pt}
        \addlegendentry{$s=2^8$}
        \addlegendimage{only marks, mark=x, mark size=4pt, line width=1pt, mycolor3}
        \addlegendentry{CP-ARLS-LEV (random)}
        \addlegendimage{only marks, mark=x, mark size=4pt, line width = 1 pt}
        \addlegendentry{$s=2^9$}
        \addlegendimage{only marks, mark=x, mark size=4pt, line width=1pt, mycolor4}
        \addlegendentry{CP-ARLS-MIX}
        \addlegendimage{only marks, mark=x, mark size=5pt, line width = 1 pt}
        \addlegendentry{$s=2^{10}$}
        \addlegendimage{only marks, mark=x, mark size=4pt, line width=1pt, mycolor5}
        \addlegendentry{CP-ARLS-UNI}
        \addlegendimage{only marks, mark=x, mark size=6pt, line width = 1 pt}
        \addlegendentry{$s=2^{11}$}
        \addplot[only marks, line width = 1 pt] table[meta=meta] {data-synthetic-dense-order3.dat};
       \end{axis}
    \end{tikzpicture}
    \caption{Comparing methods on a $500 \times 500 \times 500$ synthetic dense tensor engineered such that the factor matrices have a few rows with concentrated leverage scores.  Each method is run 10 times from random initializations (the same 10 initializations were used across all methods).}
    \label{fig:syn_dense}
\end{figure}
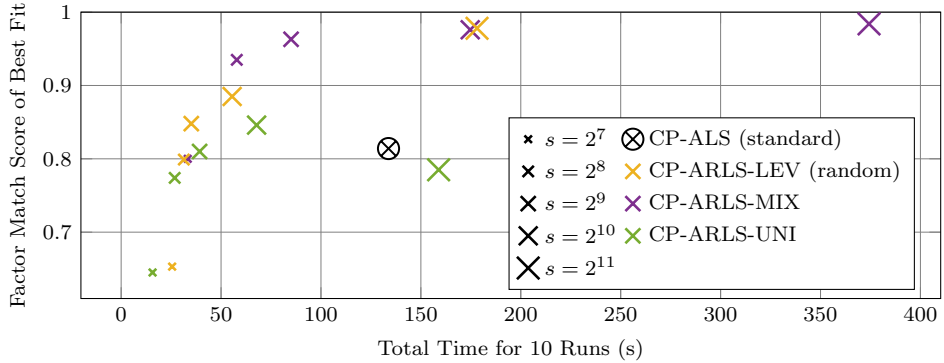 %

\subsection{Data generation}
\label{sec:data-generation}

For our experiment, we used a synthetic 500 $\times$ 500 $\times$ 500 tensor.  We begin by generating three factor matrices of size $500 \times 25$ with independent standard Gaussian entries.  
The first three columns of each factor matrix are set to 0 and then seeded with a small number of non-zeros based on user-specified parameters: spread and magnitude.
The spread specifies how many non-zeros elements are added to each of these first three columns; these non-zero elements are chosen such that each row has no more than one non-zero in the first three columns.
We use a spread of 5 such the first five rows are non-zero in the first column, the second set of five rows are non-zero in the second column, and the third set of five rows are non-zero in the third column.
The leverage scores corresponding to each column's contribution to the column space are thus spread over at most 5 rows, giving 15 total rows with very high leverage score.
The magnitude specifies the size of these non-zero elements (set to 3 in our experiments) and then has a small amount of independent noise added chosen uniformly between 0 and 0.05.
Finally, we construct the associated rank 25 tensor and add 5\% Gaussian noise to the elements.

\subsection{Experimental setup}
\label{sec:experimental-setup}

The experiments were run on environment (A) using MATLAB (Version 2018a) and the Tensor Toolbox for MATLAB \cite{TTB_Software}.
CP-ALS used the default settings except that $\mathtt{tol}=10^{-5}$ and the maximum number of iterations was set to 250.
For all the randomized algorithms, the epoch size $\eta$ was set to 5 and the algorithm terminated once $\pi = 5$ epochs failed to improve the estimated fit by $\texttt{tol} = 10^{-4}$.
We created 10 random initializations and used the same initializations for \emph{all} experiments.
Each algorithm and sample combination were run from these 10 random initialization and the total time of all 10 runs was recorded.
We computed the factor match score for the run that had the highest fit value.

%
%
%
%
% --- End Inserted File ---

\section{End-to-end Complexity of Algorithm}
\label{sec:complexity}
% ---- Inserted File ----
%
%
Here we summarize the costs of the CP-ARLS-LEV.
Recall that the tensor $\X$ is of order $(d+1)$ and size $n_1 \times n_2 \times \cdots \times n_{d+1}$,
the target rank is $r$, the number of samples per least squares solve is $s$,
and the number of samples used to (optionally) estimate the fit is $\sfit$.
We assume $d$ is a constant for the purposes of complexity calculations,
and let $\bar n = \max_k n_k$.
For a sparse tensor, $\nnz(\X)$ is the number of nonzeros in $\X$, with $\nnz(\X) = O(\prod_k n_k)$ if $\X$ is dense.
Line numbers refer to \cref{alg:cp-arls-lev} unless otherwise noted.

\subsection{Preprocessing}
\label{sec:preprocessing}

We get a speedup in the extraction of the fibers from the tensor $\X$
by precomputing linear indices for each unfolding and for each nonzero (see~\cref{sec:effic-sampl-from})
at a one-time cost of $O(\nnz(\X))$.

We also need to compute the leverage scores. This is computed
for the initial factor matrices at a cost of $O(r^2 \bar n)$
for all the factor matrices.

\subsection{Per-epoch costs}
\label{sec:per-epoch-costs}

Each epoch comprises $\eta$ outer iterations, with costs detailed below.
The only other cost per epoch is computing the fit.
Computing the exact fit costs $O(r \nnz{\X})$, which can easily the dominant cost
for larger problems, even if it is only computed once per epoch.
For the sparse problems with billions of nonzeros, an estimated fit is used as described in \cref{sec:estimating-fit} and costs $O(r \sfit)$ operations.

\subsection{Per-iteration costs}
\label{sec:per-iteration-costs}

Each outer iteration comprises $d+1$ inner iterations, indexed by $k$.
Inner iteration $k$ has the following costs:
\begin{itemize}
\item Computing the sampled indices and weights (Line 8):
  If $\tau=1$ (corresponding to what we refer to as random sampling in the experiments),
  generating indices in Line 5 of \cref{alg:skrpidx}
  is the main computation and costs $O(s\bar n)$ operations.
  If $\tau<1$, there is also the cost of computing the deterministic indices,
  with a worst-case of $O(s^d)$ computations; however, this is an optional step that can
  be aborted if it proves overly expensive.
  Combining repeated indices in Line 5 of \cref{alg:skrplev}
  requires $O(s \log s)$ computations to sort and find duplicates.
\item Computing the sampled KRP (Line 9): $O(sr)$
\item (\textbf{Dominating memory movements}) Computing the sampled tensor (Line 10): This requires finding nonzeros corresponding to the sampled rows using the precomputed linear indices. The main price we pay here is memory movement to extract up to $O(s\bar n)$ nonzeros.
\item (\textbf{Dominating computations}) Solving the sampled least squares system (Line 11): This is the solution of an $s \times r$ system with $\bar n$ right-hand sides,
  so the work is $O(s r^2)$ operations to form the QR factorization $O(sr\bar n)$ operations to multiply the right-hand sides by the $\Mx{Q}$-matrix.  If the right-hand side is sparse, the $\Mx{Q}$-matrix can be applied via sparse matrix vector multiplication (SpMV) and the cost becomes $O(r \nnz(\Mx[\tilde]{X}))$ where $\Mx[\tilde]{X}$ is the flattened sampled tensor.
\item Computing updated sampling probabilities (line 12): $O(r^2 \bar n)$ to compute the QR factorization of $\Ak$
\end{itemize}

%
%
%
%
% --- End Inserted File ---

\section{Detailed Runs on Uber Tensor}
\label{sec:uber-details}
% ---- Inserted File ----
%
%
%
%
\Cref{fig:supp-uber} provides the full run data for the experiments discussed in \cref{sec:uber} and summarized in \cref{fig:uber}.
The dotted lines are the individual runs and the solid lines are the median over the interpolated runs.
The standard method is included %
for ease of comparison.
Each marker for random and hybrid represents one epoch of five outer iterations, and each marker for \cpals (standard)
corresponds to one outer iteration.

\pgfplotstabletranspose[input colnames to={IDs}, colnames from={IDs}]{\uberraw}{data-uber-fittrace-raw.csv}
\pgfplotstabletranspose[input colnames to={IDs}, colnames from={IDs}]{\uberitp}{data-uber-fittrace-interpl.csv}
\pgfplotstabletranspose[input colnames to={IDs}, colnames from={IDs}]{\ubertime}{data-uber-totaltimes.csv}
\pgfplotstabletranspose[input colnames to={IDs}, colnames from={IDs}]{\uberfit}{data-uber-finalfits.csv}

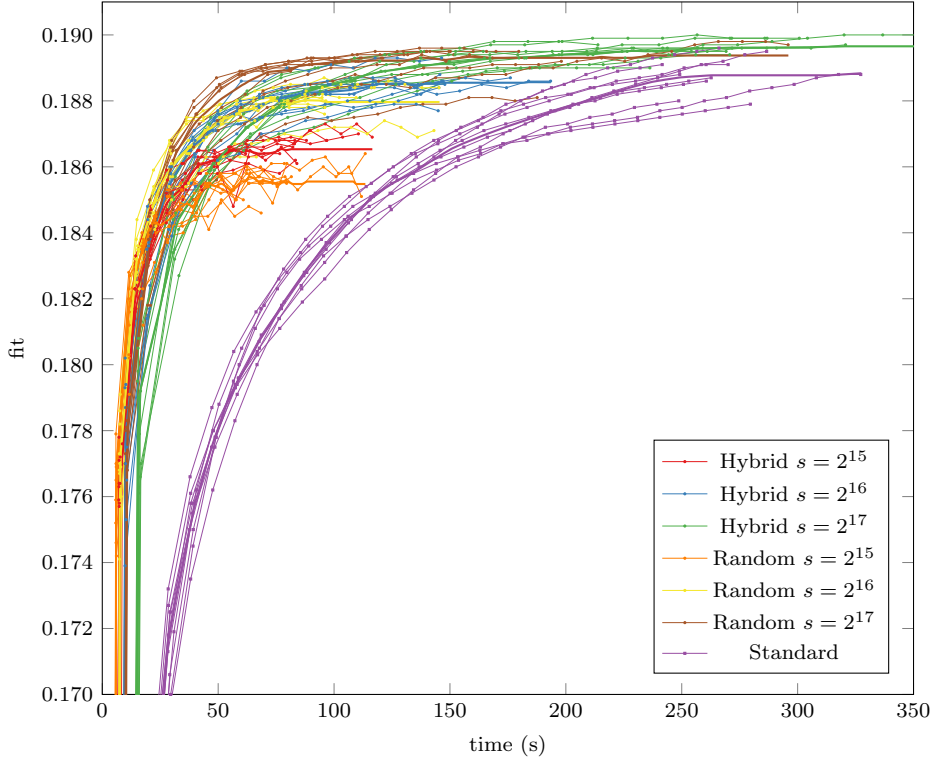
\begin{figure}[h]
  \centering
  \pgfplotsset{
    height=4.25in,
    width=0.95\textwidth,
    xlabel = time (s),
    ylabel = fit,
    ymin = 0.17, ymax = 0.191,
    xmin=0,xmax = 350,
    yticklabel style={%
      /pgf/number format/.cd,
      fixed,
      fixed zerofill,
      precision=3,
    },
    unbounded coords=jump,
    legend pos = south east,
  }
  \begin{tikzpicture}
    \begin{axis}[font=\footnotesize,    
      cycle list={
          {index of colormap=0 of Set1-8,mark=*},
          {index of colormap=1 of Set1-8,mark=*},
          {index of colormap=2 of Set1-8,mark=*},
        {index of colormap=4 of Set1-8,mark=*},
        {yellow!90!gray, mark=*},
        {index of colormap=6 of Set1-8,mark=*},
        {index of colormap=3 of Set1-8,mark=square*}}
      ]
      \foreach \i in {1,...,10} {
        \foreach \m in {1,2,3,5,6,7,4} {        
          \addplot+[mark size=.4pt, very thin] table [x=M\m_R\i_T, y=M\m_R\i_F]{\uberraw};
        }
      }
      \legend{Hybrid $s=2^{15}$,Hybrid $s=2^{16}$,Hybrid $s=2^{17}$,Random $s=2^{15}$,Random $s=2^{16}$,Random $s=2^{17}$,Standard}
      \foreach \m in {1,2,3,5,6,7,4} {
        \addplot+[thick,mark=none] table [x=TIME, y=M\m_MED]{\uberitp};
      }
    \end{axis}
  \end{tikzpicture}
  \caption{Comparison of \cprandlev (random and hybrid) with varying number of samples $s \in \set{2^{15},2^{16},2^{17}}$ and \cpals (standard)
  to compute a rank $r=25$ CP decomposition of the
  Uber tensor with 3.3 million nonzeros. The dotted lines represent the individual runs with
  markers indicating epoch (5 iterations) for the randomized methods and one iteration for the standard method.
  The solid lines show  medians.}
\label{fig:supp-uber}
\end{figure} %

%
%
%
%
%
%
%
%
%
%
%
%
%
%
%
%
%
%
%
%
%
%
%
%
%
%
%
%
%
%
%
%
%
%
%
%
%
%
%
%
%
%
%
%
%
%
%
%
%
%
%
%
%
%
%
%
%
%
%
%
%
%
%
%
%
%
%
%
%
%
%
%
%
%
%
%
%
%
%
%

%
%
%
%
% --- End Inserted File ---

\section{Initialization via Randomized Range Finder for Enron Tensor}
\label{sec:rrf}
% ---- Inserted File ----
%
%
%

This supplement uses the Enron tensor from FROSTT to illustrate how performance can be improved for some
tensors via randomized range finder (RRF) initialization.
The quality of the
randomized least squares is adversely affected by the norm of the right hand side that is outside the range of the matrix,
which we denote as $\Xmat^{\perp}$.
A random initialization could result in large $\Xmat^{\perp}$,
hurting the performance of the run.  We show that this can be fixed by
simply initializing with a random linear combination of the fibers in
the matricized tensor, a method referred to in the literature as
RRF \cite{HaMaTr11}.
(It can also be that $\Xmat^{\perp}$ is large because the method does not have multilinear structure,
but this is a property of the tensor that would generally
lead to sub par performance of any method on the problem.)

The Enron tensor is of size 6,066 $\times$ 5,699 $\times$ 244,268 $\times$ 1,176
and has 54,202,099 nonzeros.
It is the 4-way log-count\footnote{This tensor has been modified from the raw count tensor provided by FROSTT. Each entry is $\log(c+1)$ where $c$ is the count. Note that the zeros are unchanged. This is a standard weighting in text analysis. The primary effect is that the largest entries are damped.}
tensor of emails comprising 6K senders $\times$ 6K receivers $\times$ 244K words $\times$ 1K days.
For each of the run of \cprandlev,
we used hybrid sampling with $\tau = 1/s$ and an estimated fit as described in \cref{sec:estimating-fit} with  $s_{\text{fit}} = 2^{25}$ stratified samples, evenly divided between zeros and nonzeros.
The tensor elements were only sampled once and shared across all epochs and runs for consistency of reporting.
For both methods, the stopping tolerance was $10^{-4}$, and the experiments were run on computational environment (A).

\Cref{fig:enron} shows the difference between runs with a random
initialization and runs initialized via RRF on the Enron tensor.  As before, the random initialization draws from a standard
Gaussian for each element of the factor matrix. Runs initialized via
RRF formed the initial factor matrix from a random linear combination
of the matricized fibers.  This was done by first drawing $s_{\text{init}}$ fibers
uniformly from the nonzero fibers of the matricized tensor, in this
case using $s_{\text{init}} = 10^5$. As \cprandlev already forms the linear
indices of elements along each mode unfolding as a preprocessing
step, sampling and extracting the fibers is an efficient computation.
These are then multiplied by a random Gaussian matrix
$\Mx{\Omega} \in \R^{s_{\text{init}} \times r}$ in order to form a random linear
combination of the sampled fibers for each column of the factor
matrix.  By forming the columns of our initialization out of the
columns of the matricized tensor we tend to decrease the magnitude of
$\Xmat^{\perp}$, or the part of $\Xmat$ that is perpendicular to the
column space of our factor matrix.

\pgfplotstabletranspose[input colnames to={IDs}, colnames from={IDs}]{\enrontime}{data-enron-totaltimes.csv}
\pgfplotstabletranspose[input colnames to={IDs}, colnames from={IDs}]{\enronfit}{data-enron-finalfits.csv}
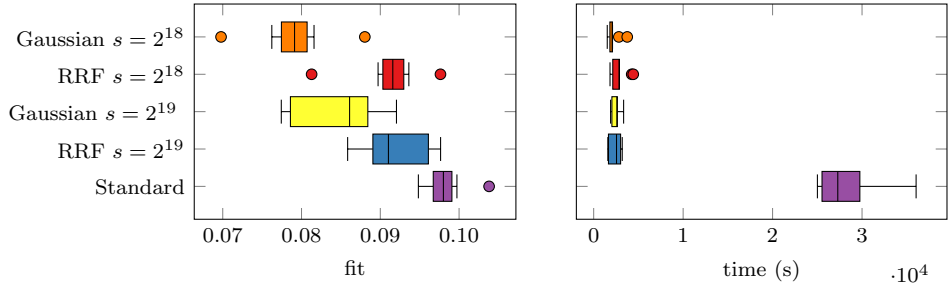
\begin{figure}[h]
  \centering
  \begin{tikzpicture}
  \begin{axis}[
    boxplot/draw direction = x,
    y dir=reverse,
    height=1.75in,width=0.45\textwidth,
    font=\footnotesize,
    xlabel = fit,
    xticklabel style={%
       /pgf/number format/.cd,
           fixed,
           fixed zerofill,
           precision=2,
           },
    cycle list={[indices of colormap={4, 0, 5, 1, 3 of Set1-8}]},
    ytick = {1, 2, 3, 4, 5},
    yticklabel style = {align=center, font=\footnotesize}, %
    yticklabels = {Gaussian $s=2^{18}$, RRF $s=2^{18}$, Gaussian $s=2^{19}$ , RRF $s=2^{19}$, Standard},
    ]
    \foreach \m in {4, 1, 5, 2, 3} {
      \addplot+[boxplot, fill, draw=black] table[y=M\m] {\enronfit};
    }
  \end{axis}
  \begin{axis}[
    boxplot/draw direction = x,
    y dir=reverse,
    height=1.75in,width=0.5\textwidth,
    font=\footnotesize,
    xlabel = time (s),
    xshift=2.0in,
    cycle list={[indices of colormap={4, 0, 5, 1, 3 of Set1-8}]},
    ytick = {1, 2, 3, 4, 5},
    yticklabel style = {align=center, font=\footnotesize}, %
    yticklabels = {,,,,},
    ]
    \foreach \m in {4, 1, 5, 2, 3} {
      \addplot+[boxplot, fill, draw=black] table[y=M\m] {\enrontime};
    }
  \end{axis}
  \end{tikzpicture}
  \caption{Comparison of different methods of initialization for \cprandlev hybrid  ($\tau=1/s$) on the Enron tensor with 54.2 million nonzeros and rank $r = 25$.  Each box plot represents 10 runs, and the experiments show that initializing via the Randomized Range Finder (RRF) with $s_{\text{init}} = 10^5$ provides a significant improvement in fit compared to Gaussian initialization.}
  \label{fig:enron}
\end{figure} %

The left panel of \cref{fig:enron} shows the fit values across 10 runs for each
initialization method for sample size $s=2^{18}$ and $s=2^{19}$; 10 runs of
\cpals are also included for comparison.  The experiments show that
the RRF greatly improves the fit found by \cprandlev and that
the fit is only comparable to \cpals if the RRF method is used.
The right panel of \cref{fig:enron} shows that the total run time is roughly
the same for either initialization method.  Furthermore, the median
runtime with RRF initialization for $s=2^{18}$ samples is 5.78 times faster and for $s=2^{19}$ samples is 4.39 times
faster than the median runtime for \cpals.

%
%
%
%
% --- End Inserted File ---

\section*{Acknowledgments}%
We would like to thank the referees and editors for useful feedback that has greatly
strengthened the results, the experiments, and the presentation.
In particular, we thank the referee that provided careful feedback on the proof
of \cref{thm:sketching}, enabling reduction of
the number of samples from $s=O(r \log n/(\beta \epsilon^2))$
to $s=O(r / (\beta \epsilon))$.
We would like to thank Daniel Martin for pointing us to the relevant problems of key rank and key enumeration in cryptography and providing us with the relevant references \cite{Glowacz15, Martin15,Poussier16}.
Thanks to Jimmy Peng for feedback on earlier draft.

\bibliographystyle{siamplainmod}
%

%%\\n

\end{document}